\renewcommand{\geq}{\geqslant}
\renewcommand{\leq}{\leqslant}
\newtheorem{theorem}{Theorem}
\newtheorem{proposition}{Proposition}[section]
\newtheorem{lemma}[proposition]{Lemma}
\newtheorem*{main-theorem}{Main Theorem}
\newtheorem*{theorem*}{Theorem}
\theoremstyle{definition}
\newtheorem{remark}[proposition]{Remark}
\newtheorem*{remark*}{Remark}
\numberwithin{equation}{section}
\def\phi{\varphi}
\def\ZZ{{\mathbb Z}}
\def\reals{{\mathbb R}}
\def\cx{{\mathbb C}}
\def\Ci{{\mathcal C}^\infty}
\def\Re{\,\mathrm{Re}\,}
\def\Im{\,\mathrm{Im}\,}
\def\div{\mathrm{div}\,}
\def\supp{\mathrm{supp}\,}
\def\Arg{\mathrm{Arg}\,}
\def\id{\,\mathrm{id}\,}
\def\O{{\mathcal O}}
\def\Op{\mathrm{Op}\,}
\def\phi{\varphi}
\def\be{\begin{eqnarray*}}
\def\ee{\end{eqnarray*}}
\def\ben{\begin{eqnarray}}
\def\een{\end{eqnarray}}
\def\lll{\left\langle}
\def\rrr{\right\rangle}
\def\L2R{L_{\text{Rest}}^2}
\def\11{\mathds{1}}
\def\tchi{\tilde{\chi}}
\def\L2c{L^2_{\text{comp}}}
\def\tV{\widetilde{V}}
\def\tP{\widetilde{P}}
\def\tgamma{\tilde{\gamma}}
\def\tR{\tilde{R}}
\def\Hsc{H_{\text{sc}}}
\def\tg{\tilde{g}}
\def\dvol{d\text{vol}_g}
\def\tOmega{\tilde{\Omega}}
\def\p{\partial}
\def\tI{\tilde{I}}
\def\trho{\tilde{\rho}}
\def\tK{\tilde{K}}
\begin{document}

\title[Damped Wave Equation]{Imperfect geometric control and
  overdamping for the damped wave equation}

\author{Nicolas~Burq}

\author{Hans~Christianson}
\email{hans@math.unc.edu}
\address{Department of Mathematics, UNC-Chapel Hill \\ CB\#3250
  Phillips Hall \\ Chapel Hill, NC 27599}

\thanks{N.B. was supported in part by Agence Nationale de la Recherche
  project NOSEVOL, 2011 BS01019 01, and H.C. was supported in part by NSF grant DMS-1059618.}


\begin{abstract}
We consider the damped wave equation on a manifold with imperfect
geometric control.  We show the sub-exponential energy decay estimate in
\cite{Chr-NC-erratum} is optimal in the case of one hyperbolic
periodic geodesic.  We show if the equation is overdamped, then the
energy decays exponentially.  Finally we show if the equation is
overdamped but geometric control fails for one hyperbolic periodic
geodesic, then nevertheless the energy decays exponentially.

\end{abstract}

\maketitle

\section{Introduction}
\label{S:intro}

In this paper, we discuss several damped wave type problems in various
geometric settings in which the support of the damping term fails to
have perfect geometric control over the whole domain.  It is known
that some loss in regularity must occur to obtain energy decay,
however the rate of decay, as a function of time, is still an
important object to study.  The starting
point for our work is the example of Lebeau \cite{Leb-dw} and the mistake in the work of the second author
\cite{Chr-NC} (which has been corrected in \cite{Chr-NC-erratum}).
In the example of Lebeau \cite{Leb-dw}, the
stable/unstable manifolds of one hyperbolic orbit are homoclinic to
those of 
other hyperbolic orbits which are contained in the damping region, so
exponential energy decay still occurs.  In this paper, we analyze
the damped wave equation on a ``lumpy torus'' manifold, which has
similar characteristics to  the example of Colin de
Verdi\`ere-Parisse \cite{CdVP-II}, in which the stable/unstable manifolds of
of a hyperbolic periodic orbit are homoclinic to each other, and hence
``come back'' from the damping regions to the undamped region (this
phenomena also occurs in a ``double-well'' potential problem \cite{HeSj}).  In
this example, we prove the strongest rate of energy decay is
sub-exponential, which is the corrected statement in
\cite{Chr-NC-erratum}.  

Motivated by the viscous damping discussion in \cite{ErZu-damp}, we discuss also more general cases when geodesics may return from the
damping region, but with a stronger damping term.  In this case, we
prove exponential energy decay with a loss in derivative.


The techniques of proofs combine microlocal resolvent estimates near
the ``trapped set'', a gluing argument, and analysis of semiclassical
defect measures to estimate the asymptotic distribution of eigenvalues
for the damped wave operator.  The size of the neighbourhood between
the real axis and the spectrum gives the rate of decay, while the
estimates of the inverse in this neighbourhood give the loss in
derivatives (see, for example \cite{Bur-wd} and the adaptation in
\cite{Chr-wave-2}).

\subsection{Organization}
This note is organized as follows.  In \S \ref{S:CdV-P-example} we
look at a particular example (following Colin de Verdi\`ere-Parisse
\cite{CdVP-I,CdVP-II}) which shows the corrected estimate in
\cite{Chr-NC-erratum} is in general sharp, and that the example in \cite{Leb-dw} is a 
special case where this estimate can be improved.  In \S
\ref{S:overdamping-I}, we show that by adding a stronger damping term,
under the usual perfect geometric control assumption, we get an 
exponential energy decay similar to the weaker damped case (a similar
problem has been studied in \cite{ErZu-damp}).  In \S
\ref{S:overdamping-II}, we re-examine the ``black box'' type framework
from \cite{BuZw-bb} and \cite{Chr-NC,Chr-NC-erratum,Chr-QMNC,DaVa-gluing,CSVW} (which includes the example in \S
\ref{S:CdV-P-example}), and show that with the addition of a stronger
damping term, the corrected estimate from \cite{Chr-NC-erratum} can be
inproved. 

\section{Imperfect geometric control: an example}
\label{S:CdV-P-example}
In this section we study a particular example of a manifold with a
hyperbolic periodic geodesic and damping which controls the manifold
everywhere outside a neighbourhood of the geodesic.  Specifically, let
$M = \mathbb{T}^2$ be the $2$-dimensional periodic surface of revolution given by 
$$ M= \{ (x,y,z); x= R(z) \cos (\theta), y = R(z) \sin (\theta), z\in
\mathbb{T}= \mathbb{R}/ 2\pi \mathbb{Z}\}, 
$$
 equipped with the warped product 
metric 
(see Figure \ref{F:torus} for a schematic drawing)
\[
g = dz^2 + R^2(z) d\theta^2.
\]

We choose the function $R(z)$ to be even, have a minimum at $z = 0$, a
maximum at $z = \pi$ with no other critical points, and have a very specific structure near $z = 0$:
\[
R^{-2} (z) = 1-z^2,
\]
for $z$ in a small neighbourhood of $0$.  

\begin{figure}
\centering
\includegraphics[keepaspectratio,width=3in]{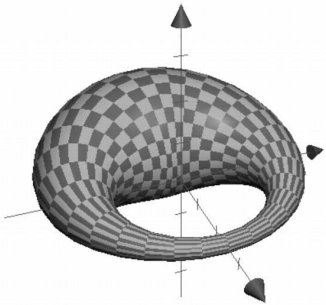}
\caption{The compact Riemannian manifold $(M,g)$.}\label{F:torus}
\end{figure}

The Riemannian volume element becomes 
\[
\dvol = R(z) dz d\theta
\]
and the Laplace-Beltrami operator is
\[
- \Delta_g = \frac {1} {R(z)}\partial_z  { R(z) } \partial _z + \frac 1 { R^2(z)} \partial_\theta^2.
\]
The manifold $M$ has a closed hyperbolic geodesic $\gamma$
characterized by $z = 0$ ($M$ also has a closed elliptic geodesic at
$z = \pm \pi$, but this section is concerned with the hyperbolic geodesic).
We consider the damped wave equation on $M$ under the assumption that
the damping term controls $M$ geometrically away from $\gamma$.
Let $a = a(z)$ be a smooth function of the $z$ variable alone
satisfying $a(z) \equiv 1$ away from $z = 0$ and $a(z) \equiv 0$ for
$z$ in a neighbourhood of $z = 0$.  
Assume further that $a$ is
symmetric about $z = 0$.  
We then consider the following equation on $M$:
\begin{equation}
\label{wave-equation-0}
\left\{ \begin{array}{l}
\left( \partial_t^2 - \Delta_g + a(z) \partial_t \right) u(z,\theta,t) = 0, \quad (z, \theta,t) \in M \times (0, \infty) \\
u(z, \theta,0) = u_0, \quad \partial_t u(z, \theta,0) = u_1.
\end{array} \right.
\end{equation}
\begin{theorem}\label{th.1}
Let $\delta >0$ and $E(t)$ be the energy for solutions to the damped wave equation
\eqref{wave-equation-0}. Assume that $f(t)$ is a function which satisfies
\[ \forall (u_0, u_1) \in H^{1+ \delta} \times H^{\delta}, 
E(t) \leq f(t)  \|(u_0,u_1)
\|_{H^{1+ \delta}\times H^{\delta}}^2 
\]
 Then there exists  $C, c_\delta>0$
such that 
\[
f(t) \geq C^{-1} e^{-c_\delta\sqrt{t}}.
\]
\end{theorem}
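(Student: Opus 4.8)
The plan is to test the hypothesized estimate against a sequence of genuine eigenmodes of the damped wave operator concentrated on the hyperbolic geodesic $\gamma$, and then to optimize the frequency of the mode against the time $t$. Since $M$ is compact, the stationary family $\mathcal{P}(\tau):=-\Delta_g-\tau^2+i\tau a$ has discrete spectrum, and whenever $\mathcal{P}(\tau)v=0$ with $v\neq 0$ one has $\Im\tau\ge 0$ and $u=e^{i\tau t}v$ solves \eqref{wave-equation-0} with $E_u(t)=e^{-2(\Im\tau)t}E_u(0)$ for all $t$. The key input I would establish is the existence of such eigenvalues $\tau_k=\lambda_k+i\beta_k$, indexed by $k\to\infty$, with $\lambda_k\to+\infty$, $\beta_k\ge 0$,
\[
\beta_k\le\frac{C_0}{\log\lambda_k},
\]
and whose eigenfunctions $v_k$ concentrate on $\gamma$ at frequency $\asymp\lambda_k$, so that $\|v_k\|_{H^s(M)}\asymp\lambda_k^s\|v_k\|_{L^2(M)}$ for every $s\ge 0$.

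Granting this, the conclusion is a short optimization. Apply the hypothesis to the solution of \eqref{wave-equation-0} with data $(u_0,u_1)=(v_k,i\tau_k v_k)\in H^{1+\delta}\times H^\delta$. The frequency localization gives $E_{u_k}(0)\asymp\lambda_k^2\|v_k\|_{L^2}^2$ and $\|(u_0,u_1)\|_{H^{1+\delta}\times H^\delta}^2\le C\lambda_k^{2\delta}E_{u_k}(0)$, while $E_{u_k}(t)=e^{-2\beta_k t}E_{u_k}(0)$, so for every $k$ and every $t>0$,
\[
f(t)\ \ge\ C^{-1}\lambda_k^{-2\delta}e^{-2\beta_k t}\ \ge\ C^{-1}\lambda_k^{-2\delta}e^{-2C_0 t/\log\lambda_k}.
\]
Writing $\lambda_k=e^{L_k}$ with $L_k\to+\infty$ and $L_{k+1}-L_k\to 0$ (because $\lambda_k\asymp k$), the exponent on the right is $-2\delta L_k-2C_0 t/L_k$; minimizing $L\mapsto 2\delta L+2C_0 t/L$ over $L>0$ gives $L=\sqrt{C_0 t/\delta}$ and minimal value $4\sqrt{\delta C_0}\,\sqrt t$, and choosing $k$ with $L_k$ as close as possible to this minimizer yields $f(t)\ge C^{-1}e^{-c_\delta\sqrt t}$ with $c_\delta=4\sqrt{\delta C_0}$, after enlarging $C$ to absorb the discreteness of $\{L_k\}$ and to cover small $t$ (where positivity of $f$, visible from the same test functions, is all that is needed).

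The heart of the matter is the eigenvalue construction, which I would carry out by separation of variables and the Colin de Verdi\`ere-Parisse analysis. Writing $u=e^{ik\theta}\psi(z)$ turns $\mathcal{P}(\tau)u=0$ into a one-parameter family of Sturm-Liouville problems on $\mathbb{T}_z$; after the usual conjugation removing the first-order term one gets $-\psi''+(k^2R^{-2}(z)+W(z)-\tau^2+i\tau a(z))\psi=0$ with $W$ smooth and bounded. Here $k^2R^{-2}$ is a barrier of height $k^2$ with a nondegenerate maximum at $z=0$ which, by the normalization $R^{-2}(z)=1-z^2$, is \emph{exactly} quadratic near the top, while $a\equiv 0$ near $z=0$, so the damping is confined to the classically allowed region away from the barrier top. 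With $h=1/k$ and $\tau^2=k^2\mathcal{E}$, near $z=0$ the operator is $k^2$ times the semiclassical inverted harmonic oscillator $-h^2\partial_z^2-z^2-(\mathcal{E}-1)$, solved explicitly by parabolic cylinder (Weber) functions; this yields the exact microlocal transfer relation across the barrier top, and WKB in the damped allowed region propagates the solution around the rest of the circle. Periodicity then imposes a Bohr-Sommerfeld-type quantization condition in which the logarithmic divergence of the classical action at the hyperbolic top (equivalently, the $\Gamma$-factor in the Weber asymptotics) is visible; this is the mechanism of \cite{CdVP-I,CdVP-II} underlying \cite{Chr-NC,Chr-NC-erratum}, now with the damping present along the homoclinic branch. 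Analyzing this condition uniformly as $h\to 0$ is, I expect, the main obstacle: one must show that the damping picked up by the (slowly, not exponentially, spread) tails of the barrier-top state during its homoclinic excursion contributes only $O(1/\log(1/h))$ to $\Im\tau_k$, rather than the $O(1)$ that a state genuinely living in the allowed region would incur. I would emphasize that one really needs the genuine complex eigenvalue here: a real-frequency quasimode concentrated at $\gamma$ has too large an error (relative error $\gtrsim 1/\log\lambda$ near a single hyperbolic orbit) for the Duhamel bound used above, and would only deliver the weaker lower bound $f(t)\gtrsim e^{-c t}$.
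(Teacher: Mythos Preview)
Your optimization argument is correct and is essentially the same computation the paper carries out at the end of \S\ref{S:CdV-P-example}. The difference is in the input you feed into it. You ask for \emph{genuine} eigenvalues $\tau_k=\lambda_k+i\beta_k$ with $\beta_k\le C_0/\log\lambda_k$, whereas the paper only builds \emph{quasimodes}: functions $v_j$ and complex numbers $\tau_j$ with $\Im\tau_j\sim c/\log|\tau_j|$ and residuals $\|R_j\|_{L^2}=O_\epsilon(|\tau_j|^\epsilon)$ (Theorem~\ref{theo-quasi}). This is exactly the separation-of-variables/normal-form/Bohr--Sommerfeld machinery you sketch, but stopped at finite order rather than pushed to an exact eigenvalue. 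The paper then closes the gap by Duhamel: with $w_j=u_j-e^{it\tau_j}v_j$ one has $(\partial_t^2-\Delta+a\partial_t)w_j=e^{it\tau_j}R_j$, and contractivity gives
\[
\|w_j(t)\|_{H^1}+\|\partial_t w_j(t)\|_{L^2}\ \le\ \int_0^t e^{-s\Im\tau_j}\,ds\,\|R_j\|_{L^2}\ \le\ C\log|\tau_j|\,\|R_j\|_{L^2},
\]
so $E_{u_j}(t)=|\tau_j|^2\bigl(e^{-2ct/\log|\tau_j|}+O(|\tau_j|^{2\epsilon-2}\log^2|\tau_j|)\bigr)$ and your optimization (take $\log|\tau_j|\sim\sqrt{t}$) finishes.

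Your last paragraph is where the proposal misreads the situation. You are right that a \emph{real}-frequency quasimode at a single hyperbolic orbit has relative error $\gtrsim 1/\log\lambda$, and that the naive Duhamel bound $\int_0^t\,ds=t$ would then only yield $f(t)\gtrsim e^{-ct}$. But the remedy is not to insist on a true eigenvalue: it is to let the quasi-eigenvalue be complex with precisely the imaginary part $\sim 1/\log\lambda$ that you predict for the spectrum. That choice does two things at once: the Bohr--Sommerfeld condition can now be solved to any finite order, driving the quasimode error down to $O(h^{2-\epsilon})$; and the Duhamel time integral becomes $\int_0^\infty e^{-s\Im\tau_j}\,ds\sim\log|\tau_j|$, uniformly bounded in $t$. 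Thus the paper sidesteps entirely the passage from quasimodes to genuine spectrum of a non-self-adjoint operator, which in your route is the hardest and least-justified step. Your approach would work if you can actually produce the eigenvalues, but that is strictly more than what is needed.
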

Formally cutting off for $t \leq 0$ and taking the Fourier transform
in time yields the following spectral equation:
\begin{align*}
P(\tau) \hat{u} := & ( -\tau^2 - \Delta_g +  i\tau a(z))\hat{u} \\
= & \hat{f},
\end{align*}
where $f$ is a function of the initial data $(u_0, u_1)$.  
In other words, understanding decay properties for solutions to the
damped wave equation is equivalent to understanding spectral
properties of the operator $P( \tau )$.  In particular, we want to
estimate the asymptotic distribution of the {\it imaginary} parts of
the $\tau_j$, where the $\tau_j$ are the eigenvalues of the
operator $P (
\tau)$.  Hence we consider the equation $P( \tau ) u = 0$. We now separate variables
\[
u(z, \theta) = \psi_{\tau,k}(z) e^{iky}, \,\, k \in \ZZ
\]
to get the following equation for $\psi_{\tau,k}$
\begin{equation}
\label{E:psi-sep-1}
\Bigl(-\frac {1} {R(z) }\partial_z  { R(z) } \partial _z + \frac {k^2} { R^2(z)} + i \tau a(z) - \tau^2 \Bigr) \psi_{\tau,k} = 0.
\end{equation}
We will ultimately be interested in the high-energy asymptotic regime
where $\Re \tau \sim |k| \to \infty$ and $|\Im \tau| \leq C$ for some $C>0$, which motivates writing
a semi-classical reduction $h= k^{-1}, \mu = h \tau$. We get 
$$ P^h_{\mu, a} = \Bigl(\frac {1} { R (z)} hD_z R(z) hD_z + \frac {1} { R^2(z)} + i h \mu a(z) - \mu^2 \Bigr)
$$
where $\mu$ ostensibly takes complex  values 
but we will be mainly interested in values of $\mu$ in a neighbourhood of
$1$. 

We further want to avoid any pesky issues with regards to the
Riemannian volume element, so we recall that if $T u = R^{1/2} u$,
then $T : L^2( R(z) dz ) \to L^2 (dz)$ is an isometry, and we can
conjugate our operator $P^h_{\mu,a}$ to get 
\[
T P^{h}_{\mu,a} T^{-1} = \Bigl((hD_z)^2 + \frac {1} { R^2(z)} + h^2
V_1(z) + i h \mu a(z) - \mu^2 \Bigr),
\]
which is an unbounded operator on $L^2(dz)$ with essentially self-adjoint
principal part.  The subpotential $V_1(z)$ involves derivatives of
$R(z)$, but in what follows, we are only interested in constructing
quasimodes of accuracy $\O(h^{2 - \delta} )$ for $\delta>0$, so the
$\O(h^2)$ subpotential is harmless.  Let us assume for the remainder
that we have conjugated and subtracted off the subpotential so that we
can concentrate on the important terms without getting bogged down in notation.

The semi-classical symbol of the operator $P^h_{\mu,a}$ is 
\begin{equation}
\begin{aligned}
& P_{\mu, a} ( z, \zeta, h)= p^0_{\mu, a} (z, \zeta) + h p^1_{\mu, a} (z, \zeta) \\
& p^0_{\mu, a} (z, \zeta)= \zeta ^2 + \frac 1 {R^2(z)}  - \mu^2 \\
& p^1_{\mu, a} (z, \zeta) = ia(z) \mu 
.
\end{aligned} \end{equation} 
\begin{theorem}\label{theo-quasi} 
  There exists sequences $h_n\rightarrow 0, \mu_n \rightarrow 1$ and $u_n\in L^2(M)$ such that 
\begin{itemize}
\item $ \Re (\mu_n) = 1+  \mathcal{O}(h_n)$
\item $\Im ( \mu_n ) =  \frac{ h_n} {\log (h_n^{-1})}$
\item $\|u_n \|_{L^2(M)} =1$
\item For any $\epsilon>0$, there exists $C>0$ such that for all $n \in \mathbb{N}$, 
$$ \|P^h_{\mu, a} (u_n) \|_{L^{2} ( M)} \leq C h_n^{2 - \epsilon}.$$ 
\end{itemize}
\end{theorem}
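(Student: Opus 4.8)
Separating variables and rescaling as in the text, it suffices to produce, along a sequence $h_n\to0$, functions $\psi_n\in L^2(\reals/2\pi\ZZ)$ with $\|\psi_n\|=1$ and
\[
\mathcal Q_{\mu_n}\psi_n=\O(h_n^{2-\epsilon}),\qquad
\mathcal Q_\mu:=(hD_z)^2+\frac1{R^2(z)}-\mu^2+ih\mu a(z),
\]
the $\O(h^2)$ subpotential being harmless at this accuracy, with $\mu_n$ as claimed and $u_n=T^{-1}(\psi_n e^{iky})$. Since $R^{-2}(z)\le1$ with equality only at $z=0$, for $\mu$ near $1$ the set $\{\zeta^2+R^{-2}(z)-\mu^2=0\}\subset T^*(\reals/2\pi\ZZ)$ is a smooth closed curve away from $z=0$, with $\zeta=\pm(\mu^2-R^{-2}(z))^{1/2}\ne0$, and at $\mu=1$ it passes through a hyperbolic point at $(0,0)$ with local branches $\zeta=\pm z$; the quasimode will concentrate on the closure of this curve, the homoclinic loop through the hyperbolic geodesic. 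The plan is to solve $\mathcal Q_\mu\psi=0$ \emph{exactly} near $z=0$ using the parabolic cylinder model, to all orders in $h$ by WKB around the rest of the circle, and to glue the two; the gluing is possible precisely when $(h,\mu)$ satisfies a Colin de Verdi\`ere--Parisse type quantization condition, from which $\mu_n$ is read off.

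\textbf{The hyperbolic model.}
For $|z|<\eta$ with $\eta$ a small fixed constant one has $a\equiv0$ and $R^{-2}(z)=1-z^2$ exactly, so there
\[
\mathcal Q_\mu=(hD_z)^2-z^2+E,\qquad E:=1-\mu^2 ,
\]
a rescaled parabolic cylinder (Weber) operator. For $E/h=\O(1)$ its solutions are explicit; on $\eta/2\le|z|\le\eta$ the classical asymptotics of Weber functions provides each solution with a complete WKB expansion, and the two sides $z\to+\eta$ and $z\to-\eta$ are tied together by the Weber connection formula, whose coefficients are built from $\Gamma\!\left(\tfrac12+\tfrac{iE}{2h}\right)$ together with a phase containing, because of the rescaling $z=h^{1/2}y$, a term $\sim\tfrac E{2h}\log\tfrac1h$. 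The crucial feature, which I will take from \cite{CdVP-I,CdVP-II}, is that this phase varies by $\sim\log\tfrac1h$ as $E$ ranges over an $\O(h)$--window; equivalently the classical period near the critical energy diverges like $\log\tfrac1{|1-\mu^2|}$ and is $\sim\log\tfrac1h$ at the scale $|1-\mu^2|\sim h$. This is the origin of the logarithm in the statement (and of the $\sqrt t$ in Theorem \ref{th.1}).

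\textbf{WKB around the circle, damping, and the quantization condition.}
On $\eta/2\le|z|\le2\pi-\eta/2$ I construct, to all orders in $h$, a WKB solution of $\mathcal Q_\mu\psi=0$ matching the Weber expansions at $z=\pm\eta$. Because the damping enters the symbol at order $h$ through $p^1=ia(z)\mu$, the WKB amplitude acquires, besides the action $S(\mu)=\int_{\eta}^{2\pi-\eta}\sqrt{\mu^2-R^{-2}(z)}\,dz$, a real damping factor $e^{-\Lambda(\mu)}$ with $\Lambda(\mu)=\tfrac12\int\frac{\mu\,a(z)}{\sqrt{\mu^2-R^{-2}(z)}}\,dz$ (finite, since $a$ vanishes near $z=0$), whose real part is positive for $\mu$ near $1$. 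Demanding compatibility of the around--the--circle monodromy with the Weber connection across $z=0$ (the $z\to-z$ symmetry makes the two lobes of the loop identical and simplifies this) yields a quantization condition $F(\mu,h)=0$ whose $\mu$--dependence is governed on the $\O(h)$ scale by $S(\mu)/h$ and on the finer $h/\log\tfrac1h$ scale by the Weber phase. One first uses the $\O(h)$--scale behaviour to fix $\Re\mu_n=1+\O(h_n)$ (there is a root of the real part in each $\Re\mu$--interval of length $\sim h/\log\tfrac1h$), and then, since $\Lambda$ has positive real part, the imaginary part of the condition closes only for a small \emph{positive} $\Im\mu_n$; a direct computation --- equivalently first--order perturbation by $ih\mu a$, the orbit spending all but an $\O(1/\log\tfrac1h)$ fraction of its (logarithmically long) period away from $\supp a$ --- places $\Im\mu_n$ at the scale $h_n/\log(h_n^{-1})$, and one takes $\Im\mu_n=h_n/\log(h_n^{-1})$.

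\textbf{Assembly and the main obstacle.}
Set $\psi_n=\chi_-(z)(\text{Weber solution})+\chi_+(z)(\text{around-the-circle WKB solution})$, with $\chi_\pm$ a partition of unity whose transition lies in $\eta/2\le|z|\le\eta$; since the two pieces agree there to high order and $F(\mu_n,h_n)$ is made small, $\mathcal Q_{\mu_n}\psi_n$ is $\O(h_n^2)$ before normalisation. The $L^2$--mass concentrates at the hyperbolic point, where the WKB amplitude behaves like $|\zeta|^{-1/2}\sim|z|^{-1/2}$ down to the scale $|z|\sim h^{1/2}$, so $\|\psi_n\|^2\sim\int_{h^{1/2}}^{\eta}|z|^{-1}\,dz\sim\tfrac12\log(h_n^{-1})$; normalising, reinstating the $\O(h^2)$ subpotential, and undoing $T^{-1}$ and the factor $e^{iky}$ produce $u_n\in L^2(M)$ with $\|P^h_{\mu_n,a}u_n\|_{L^2(M)}=\O(h_n^{2-\epsilon})$ and $\|u_n\|=1$. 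The main obstacle is the bookkeeping at $z=\pm\eta$: one must control, uniformly on $\eta/2\le|z|\le\eta$, the passage from the exact Weber solution to its WKB form, and follow precisely how the $\log$--enhanced Weber phase interlocks with the action $S(\mu)/h$ and the damping factor $e^{-\Lambda(\mu)}$, so that the quantization condition is genuinely solvable with the required accuracy and $\Im\mu_n$ indeed comes out at the scale $h/\log(h^{-1})$ rather than at the scale $h$.
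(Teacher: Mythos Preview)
Your overall strategy matches the paper's: reduce to the one-dimensional operator, build WKB quasimodes on the homoclinic loop away from the hyperbolic point, resolve the hyperbolic point by an exact model, and match to obtain a Bohr--Sommerfeld type quantization whose imaginary part is fixed by balancing the damping factor $e^{-\Lambda}$ against the logarithmically long period near the critical energy. The outcome $\Im\mu_n\sim h_n/\log(h_n^{-1})$ is obtained by the same mechanism in both arguments.

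The genuine difference is how the hyperbolic point is handled. You work directly with parabolic cylinder (Weber) functions for $(hD_z)^2-z^2+E$ and invoke their connection formulae (which indeed involve $\Gamma(\tfrac12+\tfrac{iE}{2h})$ and the extra phase $\sim\tfrac{E}{2h}\log\tfrac1h$). The paper instead exploits the exact quadratic structure to build an explicit unitary $h$-FIO (generated by the harmonic oscillator) conjugating $(hD_z)^2-z^2$ to $2(xhD_x+h/2i)$ with \emph{no} Egorov error, and then uses the Colin de Verdi\`ere--Parisse transfer matrix in the $x\partial_x$ model. These are essentially two presentations of the same connection problem: the transfer matrix coefficients in the paper reduce to $\Phi(t)=(2\pi)^{-1/2}\Gamma(\tfrac12-it)e^{t\pi/2}e^{-it\ln h}e^{i\pi/4}$, exactly the Weber data you are citing. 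Your route is more classical and avoids the FIO machinery; the paper's route makes the independence of the conjugation from the (complex) spectral parameter explicit, which is why they chose the exactly quadratic $R^{-2}$---the self-adjoint CdVP/HeSj framework does not directly cover complex $E$, and the FIOs in Lebeau's non-self-adjoint treatment depend on the spectrum, so the exact conjugation sidesteps both issues. Your Weber approach handles complex $E$ cleanly because the parabolic cylinder asymptotics are analytic in the parameter, so this is a legitimate alternative. One minor point: your $L^2$ normalisation picks up a factor $\sim(\log h^{-1})^{1/2}$, which the paper absorbs into the $h^{2-\epsilon}$ bound just as you do; the paper's gluing is done by an almost-periodic extension and a partition of unity on the circle, functionally the same as your $\chi_\pm$ patching.
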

The idea to prove this result is basically to keep $\mu$ and $h$ as
parameters, keeping in mind that ultimately the two first properties
in Theorem~\ref{theo-quasi} will be satisfied, and construct
approximate solutions (quasi-modes) first on the outgoing and incoming
manifolds of the hyperbolic fixed point. Of course, the homoclinicity
of these manifolds implies by geometric optics constructions that
these quasi-modes on any point of each branch uniquely determine the
quasi modes everywhere on each branch. Then we apply the method
developed in~\cite{HeSj, CdVP-II, Leb-dw}, which shows that near the
hyperbolic fixed point, one can determine uniquely the quasi-modes in
the outgoing branches in terms of the quasi-modes on the incoming
branch, via a transfer operator. This strategy clearly leads us to an
overdetermined system: the quasi modes on the outgoing branch are
determined both by the geometric optics constructions and by the
transfer matrix procedure. To overcome this overdetemination, one has
to choose cleverly the parameters $h_n$ and $\tau_n$ (subject to some
Bohr-Sommerfeld type quantization rules). 
The existing literature on the subject of unstable critical points is
lacking in several places for our purposes.  The approach of Colin de Verdi\`ere-Parisse and
Helffer-Sj\"ostrand \cite{CdVP-II, HeSj} only applies to the
self-adjoint (real spectrum) setting, whereas the stationary damped
wave operator is manifestly nonself-adjoint.  
The approach of Lebeau
\cite{Leb-dw} allows for nonself-adjoint operators, but the $h$-Fourier
Integral Operators ($h$-FIOs) have an unfavorable dependence on the spectrum.
Hence, since we are only interested in an example situation anyway, we
choose our operator so that it is exactly quadratic near $(z, \zeta) = (0,0)$.
In this case, we can construct the $h$-FIO explicitly, independent of
the spectral parameter, and with no error term in the Egorov
transformation rule (see Lemma \ref{L:exact-nf} below).  This
simplifies our analysis significantly.


We write $\mu^2 = 1 + E + i F$ for $E$ small and real and $F = \O (h
)$ small and real.  
 The operator $P( \mu, a)$ has principal symbol 
 $$p^0_{\mu, a} (z, \zeta)=  \zeta ^2 + \frac 1 {R^2(z)} -1 -E$$ and the only critical elements of the Hamiltonian vector
field $H_p =    2 \zeta \partial_z + 2R'(z) R^{-3}(z) \partial_\zeta$ are at $(z, \zeta) =
(0,0)$ and $(z, \zeta) = (\pm \pi, 0)$.  We choose fix here  a metric
so that $\zeta^2 + R^{-2}(z) -1 = \zeta^2 - z^2$ near $z = 0$.
\begin{figure}
\includegraphics[keepaspectratio,width=3in]{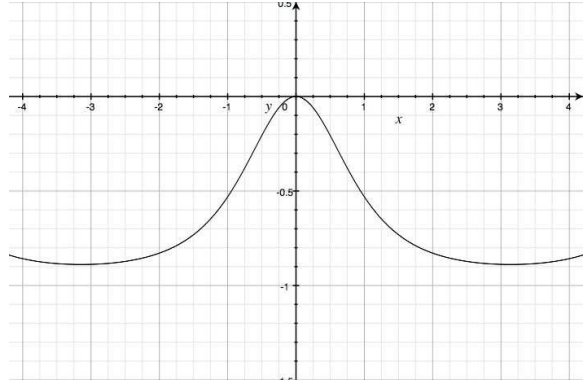}
\caption{\label{F:potential} The $1$ dimensional effective potential
  $R^{-2}(z) = (2-\cos z)^{-2} -1$.}
\end{figure}

Recalling the special exact quadratic structure of $\zeta^2 + R^{-2}(z)-1$
near the hyperbolic critical point $(0,0)$, Hamilton's ODEs become
\[
\begin{cases}
\dot{z} = 2 \zeta \\
\dot{\zeta} = 2z,
\end{cases}
\]
so that $z + \zeta = Ce^{2t}$ and $\zeta - z = C'e^{-2t}$.  This yields
the exact {\it local} phase portrait depicted in Figure \ref{F:phase1}.  The global
(periodic) 
phase portrait is depicted in Figure \ref{F:phase2}.  The fact we will
be using in this section is that the unstable manifolds near $(0,0)$
are homoclinic to the stable manifolds.  This is the opposite
situation to the example of Lebeau \cite{Leb-dw} in which the unstable
manifolds near the critical point at $(0,0)$ are {\it heteroclinic} to
the unstable manifolds near different critical points.
\begin{figure}
\input{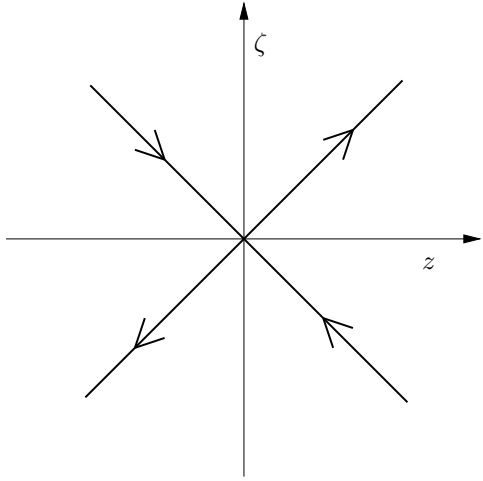}
\caption{\label{F:phase1} The local phase portrait near the hyperbolic
  fixed point $(0,0)$.}
\end{figure}

\begin{figure}
\input{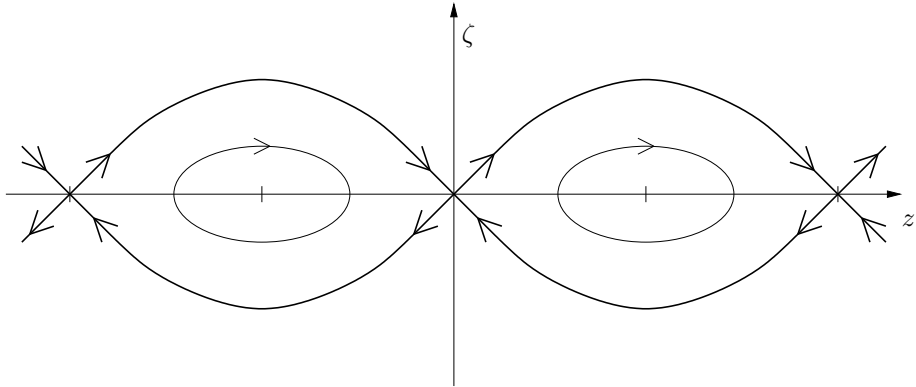}
\caption{\label{F:phase2} The global (periodic) phase portrait.  There
  is a hyperbolic fixed point at $(0,0)$ and an elliptic fixed point
  at $(\pm \pi, 0)$.  Observe that, owing to the periodicity, the
  unstable manifolds near $(0,0)$ is homoclinic to the stable
  manifolds.}
\end{figure}


\subsection{Microlocal constructions}
The starting point of the construction is to reduce the study to the model operator $x\partial_x$. This was already applied in similar contexts by Helffer-Sj\"ostrand~\cite{HeSj} and Colin de
Verdi\`ere-Parisse \cite{CdVP-II}. 

Since this is an example, we have chosen our function $R(z)$ to have a
nice structure near $z = 0$ so that a reduction to normal form is
simple and explicit.  For this we use a little bit of $h$-FIO theory.

\begin{lemma}
\label{L:exact-nf}
Let $p = \zeta^2 - z^2$ be the global quadratic form associated to the
unstable dynamical system near $(0,0)$ in our
original coordinates, and let $q = \xi x$ be the normal form for this
quadratic form.  Let 
\[
\begin{pmatrix} x \\ \xi \end{pmatrix} =
\frac{1}{\sqrt{2}} \begin{pmatrix} z + \zeta \\ \zeta -z \end{pmatrix}
\]
be the linear canonical transformation such that $\kappa^* p = -2 q$.
There is an exact unitary $h$-FIO $I : L^2(dz) \to L^2(dx)$ quantizing
$\kappa$ in the sense that the Weyl quantizations of $p$ and $q$
satisfy 
\[
I \Op_h(p)  =  \Op_h (-2q) I.
\]

\end{lemma}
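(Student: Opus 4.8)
The plan is to write $I$ explicitly as a Fourier integral operator with a quadratic phase and then verify the intertwining by a direct computation — a computation which, because $p$ is exactly quadratic, produces no remainder at all. The linear canonical transformation $\kappa$ has a quadratic generating function: solving its defining relations for $\zeta$ and $\xi$ as functions of $(x,z)$ (namely $\zeta = \sqrt2\,x - z$, $\xi = x - \sqrt2\,z$) and integrating, one is led to
\[
\Phi(x,z) \;=\; \sqrt2\,xz - \tfrac12 x^2 - \tfrac12 z^2, \qquad\text{so that}\qquad \partial_z\Phi = \sqrt2\,x - z,\quad \partial_x\Phi = \sqrt2\,z - x .
\]
Accordingly I set $I f(x) = c_h \int_{\reals} e^{i\Phi(x,z)/h}\, f(z)\, dz$. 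Factoring $I$ as multiplication by $e^{-ix^2/2h}$, followed by a semiclassical Fourier transform, a dilation $z \mapsto \sqrt2\, z$, and multiplication by $e^{-iz^2/2h}$ — each factor unitary up to an explicit scalar — shows that for the appropriate choice of $c_h$ (with $|c_h|$ comparable to $h^{-1/2}$ and a unimodular Maslov phase) the operator $I : L^2(dz) \to L^2(dx)$ is unitary.

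The crux is the pair of \emph{exact} operator identities
\[
I\,(hD_z - z) \;=\; -\sqrt2\,x\,I, \qquad\qquad I\,(hD_z + z) \;=\; \sqrt2\,hD_x\,I ,
\]
each obtained from a single integration by parts in $z$ using $\tfrac hi\partial_z e^{i\Phi/h} = (\sqrt2\,x - z)e^{i\Phi/h}$ and $\tfrac hi\partial_x e^{i\Phi/h} = (\sqrt2\,z - x)e^{i\Phi/h}$, which let one trade the multiplier $z$ and the operator $hD_z$ inside the integral for combinations of $x$ and $hD_x$ acting from the left. Since $\Op_h(p)$ is the inverted oscillator $(hD_z)^2 - z^2$, which factors as $(hD_z - z)(hD_z + z) - \tfrac hi$ (using $[hD_z, z] = \tfrac hi$), composing the two identities gives
\[
I\big((hD_z)^2 - z^2\big) \;=\; (-\sqrt2\,x)\,(\sqrt2\,hD_x)\,I \;-\; \tfrac hi\,I \;=\; \big(-2\,x\,hD_x - \tfrac hi\big)\,I \;=\; \Op_h(-2\,x\xi)\,I,
\]
the last equality being just the definition of the Weyl quantization of $-2x\xi$. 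Since $\kappa^* p = -2q$, this is exactly the asserted identity $I\,\Op_h(p) = \Op_h(-2q)\,I$.

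The one genuinely nontrivial point — and the reason the lemma is worth isolating — is that the intertwining holds on the nose, with no $\O(h^\infty)$ or even $\O(h)$ error, and this is forced by the exactly quadratic structure that $R(z)$ was chosen to have near $z = 0$. In the computation above it is visible because $\Phi$ is quadratic, so each integration by parts terminates after a single step; more structurally, $I$ is a metaplectic operator and the metaplectic representation conjugates $\Op_h$ of a polynomial of degree $\le 2$ into $\Op_h$ of its $\kappa$-pullback with no subprincipal correction. What remains — fixing the phase of $c_h$ so that $I$ is genuinely unitary, and noting that both sides really act on $L^2(\reals)$ with Lebesgue measure, which we arranged earlier through the isometry $T = R^{1/2}$ — is routine bookkeeping.
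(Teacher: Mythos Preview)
Your proof is correct, but takes a genuinely different route from the paper's. The paper constructs $I$ dynamically, as the time-$\pi/4$ propagator of the quantum harmonic oscillator $\hat H = \tfrac12((hD)^2 + x^2)$: it shows $I(t) = e^{it\hat H/h}$ is unitary, then proves an exact Egorov identity by checking that $B(t) = I(t)\Op_h(p)I(t)^{-1}$ and $\Op_h(\kappa_t^*p)$ satisfy the same first-order ODE in $t$ with the same initial data; evaluating at $t=\pi/4$ gives the result. You instead write $I$ directly as an oscillatory integral with the quadratic generating function $\Phi(x,z)=\sqrt2\,xz-\tfrac12 x^2-\tfrac12 z^2$ of $\kappa$, factor it into elementary unitaries to get unitarity, and verify the intertwining by two integrations by parts that conjugate $hD_z\mp z$ to $-\sqrt2\,x$ and $\sqrt2\,hD_x$ respectively.

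Your approach is more elementary and explicit: it avoids the ODE/commutator computation entirely and makes the absence of remainder transparent (each integration by parts terminates because $\Phi$ is quadratic). The paper's approach, on the other hand, situates $I$ within a one-parameter family of rotations, which is conceptually natural given that later in the argument the authors ``smoothly rotate back to identity outside a neighbourhood of $(0,0)$'' to extend $I$ globally; the infinitesimal viewpoint makes that deformation more evident. Both are standard manifestations of the metaplectic representation, and both deliver the key point --- exactness with no $\O(h)$ error --- equally well.
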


\begin{remark}
We note that this Lemma asserts two things: the existence of the
$h$-FIO quantizing the canonical transformation, and a Egorov type
transformation rule (that the $h$-FIO operates by pulling back on the
level of symbols).  In addition, there is {\it no error} in the Egorov
transformation law.  The usual error in the Weyl calculus is $\O(h^2)$.

\end{remark}

\begin{proof}
Let $\hat{H} = \frac{1}{2} ((hD_x)^2 + x^2)$ be the quantum harmonic oscillator.  The
symbol of $\hat{H}$ is $H = \frac{1}{2}(\xi^2 + x^2)$ whose Hamiltonian flow
generates (clockwise) 
rotations.  That is, if we solve the Hamiltonian ODEs
\[
\begin{cases}
\dot{x} = H_\xi  = \xi \\
\dot{\xi} = -H_x = -x,
\end{cases}
\]
we get the canonical transformation
\[
\kappa_t( x, \xi ) = \begin{pmatrix} x \cos t + \xi \sin t \\ \xi \cos
  t - x \sin t \end{pmatrix}.
\]
Of course in this case $\kappa_t$ is linear, given by the (clockwise) rotation
matrix
\[
R_t = \begin{pmatrix} \cos t &  \sin t \\ -\sin t & \cos
  t \end{pmatrix}.
\]
We want to rotate the symbol $\xi^2 - x^2$ into the symbol $x \xi$ and
then check the computations on the quantum level as well.  This is a
(clockwise) 
rotation by $t = \pi /4$.  

Let $\tI(t)$ satisfy the equation
\[
\begin{cases}
hD_t \tI = - \tI \hat{H}, \\
\tI(0) = \id,
\end{cases}
\]
and $I(t)$ satisfy the equation
\[
\begin{cases}
hD_t I = \hat{H} I, \\
I(0) = \id.
\end{cases}
\]
These are adjoint equations, so $\tI(t) = I(t)^*$.  Further, the
operator $F(t) = \tI(t) I(t)$ satisfies
\[
hD_t F = -\tI \hat{H} I + \tI \hat{H} I = 0,
\]
with initial conditions $F(0) = \id$, so $F(t) \equiv \id$.  Furthermore,
the operator $G(t) = I(t) \tI(t) -\id$ satisfies the homogeneous
equation 
\[
hD_t G = [\hat{H}, G],
\]
together with the initial condition $G(0) = 0$, hence $G(0) \equiv 0$
so that $I(t)$ and $\tI(t)$ are inverses.  This shows $I(t)$ is
unitary.  We can also express these operators explicitly in terms of
harmonic oscillator projectors.  Let $\{ H_k \}$ be the normalized
eigenfunctions of the harmonic oscillator $\hat{H}$ with eigenvalues
$\lambda_k$.  Then 
\[
I (t) f = \sum_k e^{i t \lambda_k /h} \lll f, H_k \rrr H_k,
\]
and 
\[
\tI(t) g = \sum_k e^{-it \lambda_k /h } \lll H_k , g \rrr
\overline{H_k}.
\]

We now want to understand a version of the Egorov theorem for this
operator, especially at the angle of $t = \pi /4$.  Let $p = \xi^2 -
x^2$ be our initial symbol and let $b_t = \kappa_t^* p$, where
$\kappa_t$ is the rotation transformation expressed in terms of $\hat{H}$
above.  That is,
\begin{align*}
b_t( x, \xi) & = p( x \cos t + \xi \sin t , \xi \cos t - x \sin t ) \\
& = (\xi \cos t - x \sin t )^2 - (x \cos t + \xi \sin t )^2 \\
& = (\cos^2 t-\sin^2 t) (\xi^2 - x^2)  -4 x \xi \sin t \cos t.
\end{align*}
The Weyl quantization of $b_t$ is easy to compute:
\[
\Op_h (b_t) = (\cos^2 t-\sin^2 t) ((hD_x)^2 - x^2)  -4 \sin t \cos t
(x hD_x + h/2i).
\]
Differentiating with respect to $t$, we have
\[
\p_t \Op_h (b_t) = -4\cos t\sin t ((hD_x)^2 - x^2)  -4 ( \cos^ t -
\sin^2 t)
(x hD_x + h/2i).
\]
On the other hand, if we let $B(t) = I(t) ( (hD_x)^2 - x^2)\tI(t)$,
we have
\[
\p_t B(t) = \frac{i}{h} [\hat{H}, B(t) ].
\]
We want to compare this to $\p_t \Op_h(b_t)$.  We compute (after a
tedious computation)
\begin{align*}
[\hat{H}, \Op_h(b_t)] & = \hat{H} \left( (\cos^2 t-\sin^2 t) ((hD_x)^2 - x^2)  -4 \sin t \cos t
(x hD_x + h/2i) \right) \\
& \quad - \left( (\cos^2 t-\sin^2 t) ((hD_x)^2 - x^2)  -4 \sin t \cos t
(x hD_x + h/2i) \right) \hat{H} \\
& = - 4 \sin t \cos t \left(  \frac{h}{i} ( (xhD_x)^2 - x^2 ) \right)
+ (\cos^2 t - \sin^2 t ) \left( 2h^2 - 4 \frac{h}{i} x hD_x \right) \\
& = \frac{h}{i} \left( -4 \sin t \cos t ( (xhD_x)^2 - x^2 ) - 4 (
  \cos^2 t - \sin^2 t) (x hD_x + h/2i ) \right) \\
& = \frac{h}{i} \p_t \Op_h(b_t).
\end{align*}

That means the operators $B(t)$ and $\Op_h(b_t)$ satisfy the same
differential equation and agree at $t = 0$, so $B(t) \equiv \Op_h
(b_t)$.  We are interested in $t = \pi /4$, which gives 
\[
I(\pi /4) ( (hD_x)^2 - x^2) \tI(\pi /4) = -2 ( x hD_x + h/2i ).
\]

\end{proof}

We apply this Lemma locally near $(0,0)$ where our semiclassical
operator has full symbol
\[
P_{\mu,a}(z, \zeta, h) = p_{\mu,a}^0 + h p_{\mu , a}^1.
\]
Near $(0,0)$, we have $a(z) \equiv 0$, so in this neighbourhood
(recalling the form of $R(z)$ and using the notation $\mu^2 = 1 + E +
iF$ with $F = \O(h)$)
\[
p_{\mu,a}^0 = \zeta^2 - z^2 - E,
\]
and
\[
p_{\mu,a}^1 = -i \frac{F}{h} .
\]

Given the $h$-FIO constructed in Lemma \ref{L:exact-nf}, we can of
course rotate the other direction to replace the annoying $-2$ with a
$2$.  We can then smoothly
rotate back to identity outside a neighbourhood of $(0,0)$, which produces
a new $h$-FIO (still denoted by $I$) which can be extended globally on
$L^2(M)$.    
Choose a microlocally elliptic operator $e$ such that $e \equiv \id$
near $(0,0)$ on the set where we have not modified $I$.  Then 
\begin{equation}
\begin{cases}
I PI^{-1} = Q,\\
e \circ Q = 2 \left(x hD_x + \frac{h}{2i} -E/2 -iF/2 \right) \circ e.
\end{cases}
\end{equation}
We observe that, since conjugation by $I$ acts by {\it pullback} in
phase space, a rotation of $\pi /4$ counterclockwise rotates the local
dynamical system $\pi /4$ {\it clockwise} (see Figures \ref{F:phase1}
and \ref{F:phase4}).





We now are going to use this construction together with a monodromy argument to
construct quasimodes for the stationary damped wave operator.  The
point is that, since a wave packet must travel through the damping
region for some time, the incoming and outgoing coefficients are
related by a non-unitary factor.  This implies that the
quasi-eigenvalues have non-zero imaginary part.  In what follows we
will endeavour to use $(z, \zeta)$ for the original coordinates and
$(x, \xi)$ for canonical coordinates.  We will use sub- and
super-scripts of in/out to denote solutions microsupported on
stable/unstable manifolds, and $\pm$ to denote $\pm \zeta\geq 0$ in
the original coordinates.  In canonical coordinates, which we recall
begins with a linear rotation by $\pi/4$ clockwise, the $\pm$ refers
to 
$\pm x \geq 0$ (see Figure \ref{F:phase4}).  
\begin{figure}
\input{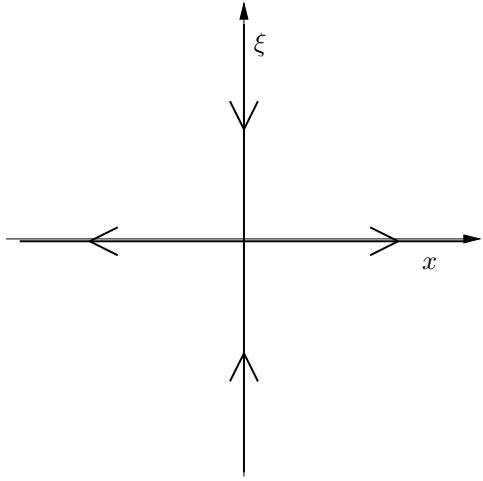}
\caption{\label{F:phase4} The local phase portrait near the hyperbolic
  fixed point $(0,0)$ in canonical coordinates $(x, \xi)$.  Observe
  the local stable/unstable manifolds have been rotated clockwise by $\pi/4$.}
\end{figure}

In our original coordinates, write 
\[
\psi^{ {in/out}}_{\pm} (z) = e^{i \phi^{ {in/out}}_{\pm}(z)
  /h} \sigma^{ {in/out}}_{\pm} (z,h),
\]
for a real phase $\phi^{ {in/out}}_{\pm}(z)$ independent of $h$
and an amplitude $\sigma^{ {in/out}}_{\pm}(z,h)$.  
Near $z = 0$, the function $a(z) \equiv 0$, so the functions
  $\psi^{ {in/out}}_{\pm}$ solve an {\it un}-damped equation
  there.  Hence we can relate these solutions near $z = 0$ to the
  model problem in canonical coordinates by conjugation using Lemma
  \ref{L:exact-nf}.

Since everything has been assumed to be symmetric about $z = 0$,
  eigenfunctions must be odd or even.  To fix one, let us assume the
  eigenfunction in which we are interested is even.  Hence
\begin{equation}
\label{E:psi-even}
\begin{cases}
\psi^{ {out}}_{+}(z) = \psi^{ {out}}_{-} (-z),  \text{ for } z
>0, \text{ and }\\
\psi^{ {in}}_{+}(-z) = \psi^{ {in}}_{-} (z), \text{ for } z
>0.
\end{cases}
\end{equation}


On the other hand, in canonical coordinates, we can solve the model
problem explicitly.  In what follows, we denote by
\[
\rho(h) = h/2i -E/2 -iF/2.
\]

 Let 
 $$ v_+^{ {out}}(x, h) = 1_{x>0} x^{- \frac i h \rho(h)}, \qquad v_-^{
   {out}} (x,h) = v_+^{ {out}} (-x, h), $$
and
\[
\hat{v}_+^{ {in}}(\xi, h) = 1_{\xi>0} x^{ -\frac i h \rho(h)}, \qquad \hat{v}_-^{
   {in}} (\xi,h) = \hat{v}_+^{ {in}} (-\xi, h), 
\]
 be the microlocal basis of the space of solutions to 
 $$ (x \partial_x + \frac i h \rho (h)) u =0. $$
These solutions are also valid in a neighbourhood of $(0,0)$,
  hence again the damping function $a$ has no effect.  Then these
  solutions can be related via the transfer matrix.

Solutions in canonical coordinates must be related to solutions
  in original coordinates via the FIO in Lemma \ref{L:exact-nf}.  The FIO
  $I$ is independent of $\rho(h)$.  Working microlocally near $(0,0)$
  and applying $I^{-1}$ and
  using that the microsupport of each of the $v^{ {in/out}}_\pm$
  is rotated counterclockwise by $\pi/4$, the resulting functions must
  be expressed as scalar multiples of the corresponding microlocal solutions
  in the original coordinates.  That is, we write 
\[
I^{-1}  v^{ {in/out}}_\pm = \gamma^{ {in/out}}_\pm
e^{i \rho^{ {in/out}}_\pm} \psi^{ {in/out}}_\pm.
\]
Here the parameters $\gamma^{ {in/out}}_\pm,
\rho^{ {in/out}}_\pm$ are real-valued, depending on $\rho(h)$.  Our
first task is to determine the $\gamma^{in/out}_+$ and
$\rho^{in/out}_+$ in terms of the spectral parameters $E$ and $F$.  

The canonical transformation in Lemma \ref{L:exact-nf} preserves the even symmetry of
all functions.  
Then
the coefficients associated to $v_\pm^{ {out}}$ (similarly ``in'')
must be the same.  Hence 
\[
 \gamma_+^{ {in/out}} e^{i \rho_+^{ {in/out}}} =
 \gamma_-^{ {in/out}} e^{i \rho_-^{ {in/out}}} .
\]
We use a trick from \cite{CdVP-II} to compute the singularities in the
phases in terms of $E$, and then find the singularities in the
amplitudes in terms of $F$.

We write our eigenfunction in original coordinates as a linear
combination (recalling the symmetry assumption \eqref{E:psi-even})
\[
\psi = \lambda_+^{out} \psi_+^{out} + \lambda_+^{in} \psi_+^{in}.
\]
We then transform $\psi$ into canonical coordinates:
\begin{align*}
I \psi & = \lambda_+^{out} I \psi_+^{out} + \lambda_+^{in}
I\psi_+^{in} \\
& = \lambda_+^{out}  \left( \gamma_+^{ {out}} e^{i \rho_+^{ {out}}}
\right)^{-1} v_+^{out} +       \lambda_+^{in} \left( \gamma_+^{ {in}} e^{i \rho_+^{ {in}}}
\right)^{-1} v_+^{in}.
\end{align*} 
As $I\psi$ is a microlocal eigenfunction near $(0,0)$, we know the
coefficients must be related by the transfer matrix.  We will use
this, together with geometric optics near $(0,0)$ to compute the
singularities in the phase and the amplitude as $E \to 0$.  We observe
that, even though the transfer matrix is a {\it matrix}, our symmetry
assumptions allow us to operate only on the $+$ components, in which
case the transfer matrix is a scalar up to $\O(h^\infty)$.  In an
abuse of notation, we will use $T$ to denote the transfer matrix and
scalar both when no confusion may arise.  

Rearranging, we have a new microlocal solution in canonical
coordinates 
\begin{align*}
(\gamma_+^{out}   \gamma_+^{ {in}}  e^{i \rho_+^{ {out}}} e^{i \rho_+^{ {in}}} )I \psi & = \lambda_+^{out}   \gamma_+^{ {in}} e^{i \rho_+^{ {in}}}
 v_+^{out} +       \lambda_+^{in} \gamma_+^{ {out}} e^{i \rho_+^{ {out}}}
 v_+^{in} \\
& = \tgamma^{out} e^{i \trho^{out}} v_+^{out} + \tgamma^{in} e^{i
  \trho^{in}} v_+^{in},
\end{align*}
where
\[
\tgamma^{out/in} = | \lambda_+^{out/in} | \gamma_+^{in/out} ,
\]
and
\[
\trho^{out/in} = \Arg ( \lambda_+^{out/in} ) + \rho_+^{in/out} .
\]
Then the transfer matrix relates the coefficients:
\[
\tgamma^{out} e^{i \trho^{out}}  = T( \rho(h)) \tgamma^{in} e^{i
  \trho^{in}} .
\]


In order to compute the changes in phase and amplitude, we compute the
geometric optics near $(0,0)$.  
We write down the WKB ansatz assuming $F = \O(h)$: 
\begin{align*}
( (hD_z)^2&  -z^2 -E -iF) e^{i \phi /h} \sigma \\
& = e^{i \phi / h } \left( \phi_z^2 -z^2 - E + \frac{h}{i} ( 2\sigma_z
  \phi_z + \phi_{zz} \sigma + \frac{ F}{h} \sigma ) - h^2 \sigma_{zz}
\right) \\
& = 0.
\end{align*}

That is, for $E>0$, the phases satisfy
the usual eikonal equations at energy $E$
\begin{equation*}
   \partial_z \phi^{in/out}_{\pm} = \pm \sqrt{E + z^2 } .
 \end{equation*}
Considering as usual only the $+$ components, then transitioning from
$z = -\epsilon$ to $z = \epsilon$, and fixing a gauge where the phases
$\phi_+^{in/out}$ agree at the gluing points $z = \mp \epsilon$, we have 
\begin{equation}
\label{E:rho-rel}
\Arg( \lambda_+^{out} ) = \Arg ( \lambda_+^{in} ) +
h^{-1} \int_{-\epsilon}^{\epsilon} \sqrt{E + z^2 } dz + \O_E (h).
\end{equation}
We can compute this area integral explicitly:
\begin{align*}
A(E) & : = \int_{-\epsilon}^{\epsilon} \sqrt{E + z^2 } dz \\
& = E \left[ \frac{ \epsilon ( \epsilon^2 + E)^{1/2}}{E} + \log \left(
    \frac{ \epsilon + (\epsilon^2 +E)^{1/2} }{\sqrt{E} } \right)
\right].
\end{align*}
We observe as $E \to 0$, the logarithmic term has a singularity of the
form
\[
-\frac{1}{2} E \log (E),
\]
which is not a $\Ci$ function.  

Since we are no longer in the self-adjoint setting (as opposed to \cite{CdVP-II}, we need also
compute how $| \lambda^{in/out} |$ changes as a function of $F$.  We
can solve the first transport equation (the terms with $h/i$):
\[
\sigma(z) = (\epsilon^2 + E)^{1/4} ( \phi'(z) )^{-1/2} \exp \left( -\frac{F}{2h}
  \int_{-\epsilon} ^z
  (\phi'(s))^{-1} ds \right).
\]
We have normalized so that $\sigma( -\epsilon ) = 1$.  
Then as $z$ goes from $- \epsilon$ to $\epsilon$, we have 
\begin{equation}
\label{E:lambda-rel}
| \lambda_+^{out} | = (\sigma ( \epsilon ) + \O_{E,F} (h) ) | \lambda^{in}_+|.
\end{equation}
Given the explicit form of $\phi'$, we can compute the integral in
$\sigma(\epsilon)$ exactly (noticing that the constants cancel at $z =
\pm \epsilon$):
\begin{equation}
\label{E:sigma-eps}
\sigma(\epsilon ) =   \exp
\left( -\frac{F}{2h} \log \left( \frac{ \epsilon + ( \epsilon^2
      +E)^{1/2} }{\sqrt{E} } \right) \right).
\end{equation}

Returning now to the transfer matrix formalism, we have
\[
\tgamma^{out} =| T( \rho(h))| \tgamma^{in}
\]
and
\[
\trho^{out}  = \Arg( T( \rho(h)) ) + 
  \trho^{in}.  
\]
Plugging in the definitions of of $\tgamma^{in/out}$ and
$\trho^{in/out}$ we have
\begin{equation}
\label{E:lambda-rel-1}
| \lambda_+^{out} | \gamma_+^{in}  = | T( \rho(h)) | | \lambda_+^{in}
| \gamma_+^{out}  
\end{equation}
and
\begin{equation}
\label{E:rho-rel-1}
\Arg ( \lambda_+^{out} ) +  \rho_+^{in}  = \Arg ( T( \rho(h))) +
\Arg ( \lambda_+^{in} ) +  \rho_+^{out}  .
\end{equation}
Using \eqref{E:lambda-rel} in \eqref{E:lambda-rel-1} and
\eqref{E:rho-rel} in \eqref{E:rho-rel-1}, we get
\begin{equation}
\label{E:gamma-rel-3}
| T( \rho(h)) |  \frac{ \gamma_+^{out} }{ \gamma_+^{in} }   =\sigma (
\epsilon ) + \O_{E,F} (h) 
\end{equation}
and
\begin{equation}
\label{E:rho-rel-3}
\Arg(T( \rho(h))) + \rho_+^{out} - \rho_+^{in} = \frac{A(E)}{h} +
\O_E(h).
\end{equation}
We have four asymptotic developments to consider.  Let
\[
\frac{ \gamma_+^{out} }{ \gamma_+^{in} }  = \sum_{p,q,r}
\gamma_{p,q,r} h^p E^q F^r,
\]
\[
\rho_+^{out} - \rho_+^{in} = \sum_{p,q,r}
\rho_{p,q,r} h^p E^q F^r,
\]
\[
E = \sum_{k}
E_{k} h^k ,
\]
and
\[
F = \sum_{k}
F_{k} h^k .
\]
All of the above sums start at $0$ except the sum for $\rho_+^{out} -
\rho_+^{in}$ must be allowed to start at $p = -1$, and $F_0 = 0$ so
that $F = \O(h)$.  

The last missing piece is to compute the asymptotics of the transfer
matrix.  From \cite{CdVP-II}, we have
\[
T( \rho(h) ) = \Phi \left( \frac{ E + iF }{2h} \right) (1 + \O( h^\infty) ),
\]
where
\[
\Phi( t ) = \frac{1}{\sqrt{2 \pi }} \Gamma ( 1/2 -it ) e^{t \pi /2}
e^{-i t \ln (h) } e^{i \pi /4}.
\]
For fixed $E>0$, the number $1/2 -iE/2h + F/2h$ has modulus going to
$\infty$ and real part positive if $F = \O(h)$ is sufficiently small
(we will see eventually that $F = o(h)$, so this poses no problem).
Hence we may apply Stirling's formula to the $\Gamma$ function:
\[
\Gamma(z) = \sqrt{\frac{2 \pi}{z} }\left( \frac{ z }{e} \right)^z ( 1 +
  \O(z^{-1})).
\]
For $E>0$, we write
\[
z = 1/2 -iE/2h + F/2h = -i\frac{E}{2h} \left(1 + i \frac{2h}{E} ( 1/2 +
  F/2h)\right),
\]
so that
\begin{align*}
\log (z) & = \log \left( -i\frac{E}{2h} \right) + \log \left(1 + i \frac{2h}{E} ( 1/2 +
  F/2h)\right) \\
& = \log ( E/2h) - i \pi /2 +  i \frac{2h}{E} ( 1/2 +
  F/2h) + 2 \frac{h^2}{E^2} (1/2 + F/2h)^2        + \O_E(h^3).
\end{align*}
Then
\begin{align*}
\Phi& \left( \frac{E + iF}{2h} \right) \\
& = \exp \Bigg( -\frac{1}{2}
  \log(z) + z \log (z) -z  +  \left( \frac{E + iF}{2h} \right)\pi /2
\\
& \quad -i \left( \frac{E + iF}{2h} \right) \ln (h)  + i \pi /4 \Bigg) ( 1 +
\O(h/E)) \\
& = \exp \Bigg( -\frac{1}{2}\Big( 
  \log ( E/2h) - i \pi /2 +  i \frac{2h}{E} ( 1/2 +
  F/2h) \\
& \quad \quad + 2 \frac{h^2}{E^2} (1/2 + F/2h)^2  + \O_E(h^3) \Big) \\
& \quad  + ( 1/2 -iE/2h + F/2h) \\
& \quad \quad \cdot \Big( \log ( E/2h) - i \pi /2 +  i \frac{2h}{E} ( 1/2 +
  F/2h) \\
& \quad \quad \quad + 2 \frac{h^2}{E^2} (1/2 + F/2h)^2  + \O_E(h^3) \Big) \\
 & \quad -( 1/2 -iE/2h + F/2h)   +  \left( \frac{E + iF}{2h}
 \right)\pi /2 \\
& \quad  \quad -i \left( \frac{E + iF}{2h} \right) \ln (h)  + i \pi /4 \Bigg)\\
& \quad \quad \quad \quad \cdot ( 1 +
\O(h/E)) .
\end{align*}
The imaginary part of the exponent is
\begin{align*}
\Arg( \Phi ) & =  -\frac{E}{2h} \log (E/2h) - \frac{h}{E} (1/2 + F/2h)^2 + \frac{F}{E} (1/2
+ F/2h) \\
& \quad +E/2h  -\frac{E}{2h} \ln (h) + \pi /4   + \O_E(h^3) \\
& = \frac{E}{2h} \left( - \log (E/2h) - \ln (h) +1 \right) + \pi /4 -
\frac{h}{E} (1/2 + F/2h)^2 \\
& \quad + \frac{F}{E} (1/2
+ F/2h)+ \O_{E,F}(h^3) \\
& = \frac{E}{2h} \left(1 - \log(E/2) \right) + \pi /4 - \frac{h}{E}
(1/2 + F/2h)^2 \\
& \quad + \frac{F}{E} (1/2
+ F/2h)+ \O_{E,F}(h^3) .
\end{align*}
The real part of the exponent is
\begin{align*}
\tau & : = \frac{F}{2h} \log(E/2h)   +\frac{F}{2h} \ln (h)    +
\frac{F h}{E^2} ( 1/2 + F/2h)^2 + \O(h^2/E^2) \\
& = \frac{F}{2h} \log (E/2) + \O(h^2 / E^2),
\end{align*}
since we have assumed $F = \O(h)$.

Reading off the
first terms in the expansion \eqref{E:rho-rel-3}, we have for
$h^{-1}$:
\begin{align*}
\frac{E}{2h} \left(1 - \log(E/2) \right)  + \frac{\rho_{-1,0,0}}{h} & =
\frac{A(E)}{h} \\
& = \frac{E}{h} \left[ \frac{ \epsilon ( \epsilon^2 + E)^{1/2}}{E} + \log \left(
    \frac{ \epsilon + (\epsilon^2 +E)^{1/2} }{\sqrt{E} } \right)
\right].
\end{align*}
The logarithmic singularity is the same on each side of this equation,
so $\rho_{-1,0,0}$ is a smooth function of $E\geq 0$:
\[
\rho_{-1,0,0} = -\frac{E}{2} ( 1 + \log(2)) + \epsilon ( \epsilon^2
+E)^{1/2} + E \log ( \epsilon + (\epsilon^2+E)^{1/2} ).
\]
The terms with $h^0$ read
\[
\rho_{0,0,0} = - \pi /4, \,\,\, E \rho_{0,1,0} = 0,
\]
and the next terms read
\[
h \rho_{1,0,0} + F \rho_{0,0,1} - \frac{h}{E} (1/2 + F/2h)^2 + \frac{F}{E} (1/2
+ F/2h) = \O_E(h).
\]
Setting
\[
\rho_{0,0,1} = \frac{1}{E} (1/2
+ F/2h)
\]
one can solve for $\rho_{1,0,0}$ to remove the remaining terms.  This
solves for the phase difference up to $\O_{E,F}(h^3)$.  The remaining
terms in the series are similarly obtained.

Reading off the first terms in the expansion \eqref{E:gamma-rel-3} for
the amplitude, 
\begin{align*}
| T( \rho(h)) | (\gamma_{0,0,0} + h \gamma_{1,0,0} + E \gamma_{0,1,0}
+ F \gamma_{0,0,1} ) & = \sigma (
\epsilon ) + \O_{E,F}(h),
\end{align*}
or
\begin{align*}
\exp & \left( \frac{F}{2h} \log (E/2)  + \O_F(h^2/E^2) \right) (\gamma_{0,0,0} + h \gamma_{1,0,0} + E \gamma_{0,1,0}
+ F \gamma_{0,0,1} )  \\
& = 
 \exp
\left( -\frac{F}{2h} \log \left( \frac{ \epsilon + ( \epsilon^2
      +E)^{1/2} }{\sqrt{E} } \right) \right) + \O_{E,F}(h).
\end{align*}
Rearranging and pulling the $h^0$ terms, we have
\[
\gamma_{0,0,0} + E \gamma_{0,1,0} = \exp \left( -\frac{F}{4h} \log (E) + \frac{F}{2h} (
  \log(2) - \log ( \epsilon + (\epsilon^2 +E)^{1/2} ) )\right).
\]
We can take $\gamma_{0,1,0} = 0$.  
Notice in this case, $\gamma_{0,0,0}$ is a smooth function of $E \to
0$ if 
\[
\frac{F}{h} \log (E)
\]
is a smooth function.  In particular, we must have $F = \O ( h | \log
(E ) | )$.  
Writing out the terms for $h^1$ we have:
\[
h \gamma_{1,0,0} + F \gamma_{0,0,1} = \O_{E,F}(h),
\]
which can be solved for any error $\O_{E,F}(h)$ (this is the error in
computing the geometric optics amplitude from $z = - \epsilon$ to $z =
\epsilon$.  This computes the change in amplitude up to $\O_{E,F}( h^2)$.
Again, the remaining terms in the series are similarly computed.

Let us return to the geometric optics construction of
$\psi^{ {in/out}}_\pm(z)$, and now compute the monodromy as $z$ goes
from $\epsilon$ to $2 \pi - \epsilon$, using the homoclinicity.  The phases satisfy the usual eikonal
equations, and we have normalized by taking all phase functions to be
$0$ at the ``gluing'' points $z = \pm \epsilon$: 
 \begin{equation}
 \begin{aligned}
   &\partial_z \phi^{in/out}_{\pm} = \pm \sqrt{ 1 + E -\frac 1 {R(z)^2} },\\
     &\phi_{+} ^{in/out} (\mp \epsilon) =0, \quad \phi_{-}^{in/out} (
     \pm \epsilon ) = 0
     .
 \end{aligned}
 \end{equation}
This choice of normalization is chosen to be compatible with the
transfer matrix computations above; the change in phase from
$-\epsilon$ to $\epsilon$ is in the coefficients $\lambda_+^{in/out}$
rather than the phases $\phi_+^{in/out}$.  
We recall that
\[
\mu^2 = 1 + E + iF,
\]
with $F = \O(h)$.  If $E>0$ or $E$ is sufficiently small, then we can
expand
\[
\mu = \sqrt{1 + E} + i\frac{  F}{2 \sqrt{1 + E}} + \O(h^2).
\]
The associated symbols $\sigma_{\pm}^{out, in}\sim \sum_k h^{k}
\sigma_{\pm, k}^{out, in}$ satisfy the transport equations
\begin{align*}
 2\partial_z \phi \partial_z \sigma_0 + \left(\phi_{zz}  -a\mu +
   \frac{F}{h} \right)
\sigma_0 & = 0; \\
 2\partial_z \phi \partial_z \sigma_k + \left(\phi_{zz}  -a\mu +
   \frac{F}{h} \right)
\sigma_k & = i \p_z^2 \sigma_{k-1}.
\end{align*}

Here we have dropped the $\pm$ and in/out notation to (slightly)
simplify the presentation.  
  This allows us to describe in the region $\pm z \geq \epsilon >0$, the geometric optics solutions 
\[
\psi^{in/out}_\pm(z).
\]
  As a consequence, we obtain the following Proposition.  
  \begin{proposition}
\label{P:sigma}
For any $\delta>0$, there exists a normalized, microlocally defined function $v(z)$ on $\reals$
satisfying the following properties:
\begin{enumerate}

\item
The function $v(z)$ is almost periodic:
\[
v(z-2 \pi ) = v(z) + \O(h^{2-\delta}),
\]
for $z$ near $\epsilon$.  

\item
The derivative of $v(z)$ is almost periodic:
\[
\p_z v(z-2 \pi ) = \p_z v(z) + \O(h^{1-\delta}),
\]
for $z$ near $\epsilon$.  

\item
The function $v$ is a quasimode:
\[
P_{\mu,a}^h v = \O(h^{2-\delta})
\]
for $z$ in a neighbourhood of $[0, 2 \pi + \epsilon ]$.

\end{enumerate}

  \end{proposition}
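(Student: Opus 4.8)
The plan is to build $v$ by gluing the explicit local model near the hyperbolic point to geometric optics along the homoclinic loop, and then to read off the three properties from the transfer-matrix and monodromy computations already carried out above. First, near $z=0$ the damping vanishes, so Lemma~\ref{L:exact-nf} applies: pulling back the model solutions $v_\pm^{in/out}$ of $(x\partial_x + \tfrac i h \rho(h))u = 0$ by $I^{-1}$ produces microlocal solutions $\psi^{in/out}_\pm$ of $P^h_{\mu,a}$ in a neighbourhood of $z=0$, with no Egorov error. The even-symmetry assumption \eqref{E:psi-even} lets us track only the $+$ components, and then the transfer matrix $T(\rho(h))$ of \cite{CdVP-II} is a scalar up to $\O(h^\infty)$ whose modulus and argument were computed above via Stirling, giving the relations \eqref{E:gamma-rel-3} and \eqref{E:rho-rel-3}.

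Next I would extend $\psi_+^{out}$ away from $z=0$ by WKB: solve the eikonal equation $\partial_z\phi_+^{out} = \sqrt{1+E-R^{-2}(z)}$ and the transport equations for $\sigma_\pm^{out}$, which now carry the damping term $-a\mu + F/h$, normalizing all phases to vanish at the gluing points $z=\pm\epsilon$. Because the unstable manifold of $(0,0)$ is homoclinic to the stable manifold (Figure~\ref{F:phase2}), following this WKB solution once around, from $z = \epsilon$ to $z = 2\pi - \epsilon$, returns --- up to $\O(h^{2-\delta})$ in the amplitude and $\O(h)$ in the phase --- a scalar multiple of the incoming solution near $z = 2\pi - \epsilon$; the scalar is $\exp\bigl(\tfrac i h\int_\epsilon^{2\pi-\epsilon}\sqrt{1+E-R^{-2}(z)}\,dz\bigr)$ times the amplitude factor produced by integrating the damping transport equation around the loop. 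Composing this monodromy with the transfer-matrix relation across the normal-form region closes the loop and gives a single consistency relation.

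Then I would define $v$ to be the globally glued WKB function (geometric optics on $[\epsilon, 2\pi - \epsilon]$, the pulled-back model near $z = 0$ and near $z = 2\pi$), normalized in $L^2$. Property (3) holds on each piece because the WKB expansion is carried to order $h^{2-\delta}$ --- here it is crucial that the $\O(h^2)$ subpotential $V_1$ has been subtracted and that Lemma~\ref{L:exact-nf} has no Egorov error, so the only losses are the $\O_E(h)$, $\O_{E,F}(h)$ errors that blow up mildly as $E\to 0$ --- and the pieces match to $\O(h^\infty)$ at $z = \pm\epsilon$. Properties (1) and (2) are precisely the statement that the composed monodromy-plus-transfer factor is $1 + \O(h^{2-\delta})$, the extra power of $h$ lost on $\partial_z v$ coming from differentiating $e^{i\phi/h}$; this is arranged by choosing $E = E(h)$ and $F = F(h)$ to satisfy the Bohr--Sommerfeld-type quantization condition matching $A(E)/h$ against $\Arg T$ in \eqref{E:rho-rel-3}, together with the amplitude condition matching $\sigma(\epsilon)$ from \eqref{E:sigma-eps} against $|T(\rho(h))|$ in \eqref{E:gamma-rel-3}.

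The main obstacle is the amplitude matching in the non-self-adjoint setting: unlike in \cite{CdVP-II}, the damping contributes a genuinely non-unitary factor $\sigma(\epsilon) = \exp\bigl(-\tfrac F{2h}\log\tfrac{\epsilon + (\epsilon^2+E)^{1/2}}{\sqrt E}\bigr)$ with a $\log E$ singularity, which must cancel against the corresponding singularity in $|T(\rho(h))|$; this forces $F = \O(h|\log E|)$ and, after the phase quantization is imposed, $F\sim h/\log(h^{-1})$. Keeping the cascade of errors organized --- deciding which are $\O(h^{2-\delta})$ in $v$ versus $\O(h^{1-\delta})$ in $\partial_z v$, and checking that the gluing and the $E\to 0$ degeneration do not erode these rates --- is the delicate part of the argument.
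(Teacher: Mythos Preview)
Your proposal is correct and follows essentially the same route as the paper: local model via Lemma~\ref{L:exact-nf}, WKB extension with the damped transport equation along the homoclinic loop, and closure by composing the monodromy from $\epsilon$ to $2\pi-\epsilon$ with the transfer matrix, yielding a Bohr--Sommerfeld condition for $E$ and an amplitude condition fixing $F\sim h/|\log h|$. Two small slips worth tightening: the resulting quantization is on the full period $B(E)=\int_0^{2\pi}\sqrt{1+E-R^{-2}}\,dz$ (the $A(E)$ from the local WKB cancels against the $-A(E)$ in $\phi^{out}_+(2\pi-\epsilon)=B(E)-A(E)$), and the singularity cancellation forces $F=\O(h/|\log E|)$, not $\O(h|\log E|)$.
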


\begin{figure}
\input{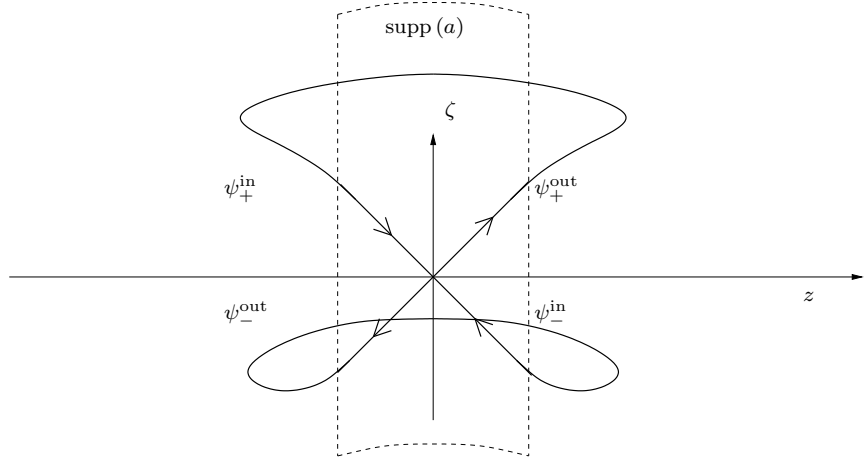}
\caption{\label{F:phase3} The global (periodic) phase portrait again,
  ``wrapped'' around $T^* \mathbb{S}^1$, 
  together with the microlocal phases of the solutions to $P_h(z,\p_z,h) u = 0$.}
\end{figure}

\begin{proof}
To construct quasi-modes on the manifold $M$, we must construct
the solutions away from $z=0$, by solving the eikonal and transport
equations above. The first equation for the symbol $\sigma_0$ has an
explicit solution:
\begin{align*}
& \sigma_{0,0,0}(z)  \\
& \quad = (\epsilon^2 +E)^{1/4} (\phi'(z))^{-1/2} \exp \left(
  -\frac{F}{2h} \int_\epsilon^z (\phi'(s))^{-1} ds + \frac{ \mu}{2}
  \int_\epsilon^z a(s) (\phi'(s))^{-1} ds \right).
\end{align*}


Since $\phi'$ is even, we have $\phi'(2 \pi - \epsilon) =
\phi'(\epsilon)$.  Hence $\sigma_{0,0,0} ( \epsilon ) = 1$, and
\[
\sigma_{0,0,0} ( 2 \pi - \epsilon ) = \exp \left( -c_0(E) \frac{F}{2h} +
  \frac{ \mu}{2} c_1(a,E)
\right),
\]
where
\[
c_0 (E) =  \int_\epsilon^{2\pi - \epsilon} (2\phi'(s))^{-1} ds,
\]
and
\[
c_1(a,E) = \int_0^{2\pi } a(s) (2\phi'(s))^{-1} ds,
\]
if $\epsilon>0$ is sufficiently small that $a(z) \equiv 0$ for $| z |
\leq \epsilon$.


We know that the solutions $\psi^{in}_\pm$ must be related to the
solutions $\psi^{out}_\pm$ by monodromy.  That is, there is an
operator $e^{\tK_\epsilon}$ such that
\[
\begin{pmatrix} \psi^{in}_+ ( - \epsilon) \\\psi^{in}_- (
  \epsilon) \end{pmatrix} = e^{\tK_\epsilon} \begin{pmatrix} \psi^{out}_+
  (\epsilon) \\\psi^{out}_- (-\epsilon) \end{pmatrix}
\]
Our assumption that these functions have an even symmetry reduces this
to the scalar equation
\[
\psi^{in}_+ ( - \epsilon) = e^{\tK_\epsilon} \psi^{out}_+
  (\epsilon) .
\]
But we can compute the evolution of $\psi^{out}_+(z)$ through the
damping using our geometric optics construction and match it with
$\psi^{in}_+$ to find $e^{\tK_\epsilon}$.  That is, we have
\[
\psi^{out}_+ ( 2 \pi - \epsilon ) = e^{i \phi^{out}_+ ( 2 \pi -
  \epsilon) /h } \sigma^{out}_+ ( 2 \pi - \epsilon ),
\]
and we can compute the phase and principal symbol explicitly.  We have
\begin{align*}
\phi^{out}_+ ( 2 \pi -
  \epsilon) & = \int_\epsilon^{2 \pi - \epsilon} \p_z \phi^{out}_+ (z) dz \\
& = \int_0^{2 \pi } \p_z \phi^{out}_+ (z) dz - \int_{ -
  \epsilon}^{\epsilon } \p_z \phi^{out}_+ (z) dz \\
& = B(E) - A(E)
\end{align*}
where 
\[
B(E) = \int_0^{2 \pi } \sqrt{ 1 + E - R^{-2}(z) } dz, 
\]
and
\begin{align*}
A(E) & = \int_{-\epsilon}^\epsilon \sqrt{1 + E - R^{-2}(z) } dz \\
& = \int_{-\epsilon}^\epsilon \sqrt{ E +z^2 } dz
\end{align*}
as before.  

Similarly,
\[
\sigma^{out}_{0,0,0} ( 2 \pi - \epsilon ) = e^{-c_0(E) \frac{F}{2h} +
  \frac{ \mu}{2} c_1(a,E)} \sigma^{out}_{0,0,0} ( \epsilon ),
\]
so that the principal part of the monodromy is computed
\begin{align}
\lambda_+^{in} & = \lambda_+^{in} e^{i \phi^{in}_+ (- \epsilon ) /h} \sigma^{in}_{0,0,0} (
- \epsilon ) \notag \\
& = \lambda_+^{out} e^{i \phi^{out}_+ (2 \pi- \epsilon ) /h} \sigma^{out}_{0,0,0} (
2 \pi - \epsilon ) \notag \\
& = \lambda_+^{out} 
e^{i (B(E) - A(E)) / h } 
 e^{-c_0(E) \frac{F}{2h} +
  \frac{ \mu}{2} c_1(a,E)} \sigma^{out}_{0,0,0} ( \epsilon ) \notag \\
& = \lambda_+^{out} 
e^{i (B(E) - A(E)) / h } 
 e^{-c_0(E) \frac{F}{2h} +
  \frac{ \mu}{2} c_1(a,E)}. \label{E:mono-101}
\end{align}

Let us expand the amplitudes $\sigma^{in/out}$ in asymptotic
developments:
\[
\sigma^{in/out}(z) = \sum_{p,q,r} \sigma^{in/out}_{p,q,r} h^p E^q
F^r.
\]
Then
\begin{align*}
\lambda_+^{in}&  e^{i \phi^{in}_+ (- \epsilon ) /h} \Big(
  \sigma^{in}_{0,0,0} (-\epsilon) + h \sigma^{in}_{1,0,0}(-\epsilon) +
  E \sigma^{in}_{0,1,0} (-\epsilon) \\
& \quad   + F \sigma^{in}_{0,0,1} (-\epsilon)+ \O ( E^2 + h^2 ) \Big) \\
& = \lambda_+^{out} e^{i \phi^{out}_+ (2 \pi- \epsilon ) /h} \Big(
  \sigma^{out}_{0,0,0} (2 \pi -\epsilon) + h \sigma^{out}_{1,0,0}(2
  \pi -\epsilon) \\
& \quad +
  E \sigma^{out}_{0,1,0} (2 \pi -\epsilon)
  + F \sigma^{out}_{0,0,1} (2 \pi -\epsilon)+ \O ( E^2 + h^2 ) \Big)  
\end{align*}
Plugging in \eqref{E:mono-101} for the principal terms, we have
\begin{align}
\lambda_+^{in} & \left(
  1 + h \sigma^{in}_{1,0,0}(-\epsilon) +
  E \sigma^{in}_{0,1,0} (-\epsilon)
  + F \sigma^{in}_{0,0,1} (-\epsilon)+ \O ( E^2 + h^2 ) \right) \notag
\\
& = \lambda_+^{out} e^{i (B(E) - A(E)) /h} \Big( e^{-c_0(E) \frac{F}{2h} +
  \frac{ \mu}{2} c_1(a,E)}
   + h \sigma^{out}_{1,0,0}(2
  \pi -\epsilon) \notag \\
& \quad +
  E \sigma^{out}_{0,1,0} (2 \pi -\epsilon)
  + F \sigma^{out}_{0,0,1} (2 \pi -\epsilon)+ \O ( E^2 + h^2 )
\Big). \label{E:mono-102}
\end{align}
We have computed already that
\[
\frac{\lambda_+^{out}}{\lambda_+^{in}} = (\sigma(\epsilon) + R_1 )
\exp( i A(E) /h + iR_2),
\]
where $\sigma(\epsilon)$ was computed in \eqref{E:sigma-eps} and $R_1,
R_2 = \O(h)$.   Set $e^{K_\epsilon} = e^{-c_0(E) \frac{F}{2h} +
  \frac{ \mu}{2} c_1(a,E)}$.  
Solving for $\lambda_+^{out}/ \lambda_+^{in}$ in
\eqref{E:mono-102}, we have 
\begin{align}
& (\sigma(\epsilon) + R_1 )
\exp( i A(E) /h + iR_2) \notag \\
& \quad = \left(
  1 + h \sigma^{in}_{1,0,0}(-\epsilon) +
  E \sigma^{in}_{0,1,0} (-\epsilon)
  + F \sigma^{in}_{0,0,1} (-\epsilon)+ \O ( E^2 + h^2 ) \right) \notag
\\
& \quad \quad \cdot e^{i (-B(E) + A(E)) /h} \Big( e^{K_\epsilon} 
   + h \sigma^{out}_{1,0,0}(2
  \pi -\epsilon) \notag \\
& \quad \quad \quad +
  E \sigma^{out}_{0,1,0} (2 \pi -\epsilon)
  + F \sigma^{out}_{0,0,1} (2 \pi -\epsilon)+ \O ( E^2 + h^2 )
\Big)^{-1} \notag \\
& = e^{i (-B(E) + A(E)) /h} e^{-K_\epsilon} \Big( 1 + h
  (\sigma^{in}_{1,0,0}(-\epsilon) - e^{-K_\epsilon}
  \sigma^{out}_{1,0,0}(2 \pi -\epsilon)   ) \notag \\
& \quad \quad + E
  (\sigma^{in}_{0,1,0}(-\epsilon) - e^{-K_\epsilon}
  \sigma^{out}_{0,1,0}(2 \pi -\epsilon)   ) \notag \\
& \quad \quad + F
  (\sigma^{in}_{0,0,1}(-\epsilon) - e^{-K_\epsilon}
  \sigma^{out}_{0,0,1}(2 \pi -\epsilon)   ) + \O(E^2 + h^2 )\Big). \label{E:mono-103}
\end{align}

Comparing phases on both sides of \eqref{E:mono-103}, we require 
\[
 \frac{ A(E) }{h} + \tR_2 =  -\frac{B(E)}{h} + \frac{A(E)}{ h} + 2 \pi
 k,
\]
for integer $k$, or
\[
B(E) + h \tR_2 = 2 \pi k h.
\]
Here the error $\tR_2 = R_2 +i Fc_1(a,E) /4\sqrt{1 +E} + \O(h^2)$ consists of all
of the terms in the amplitude of order $h$ or smaller.  
We observe that this Bohr-Sommerfeld type quantization condition is
independent of the gluing point $\epsilon$, and gives
a discrete choice of values of $E$.  In particular, this equation can
be solved for $E>0$, $E \sim h$, as an asymptotic series as described
previously.  For such a value of $E$, we compare the amplitudes on
each side of \eqref{E:mono-103}:
\begin{align}
\sigma & (\epsilon) + R_1 \\
& = e^{-K_\epsilon} \Big( 1 + h
  (\sigma^{in}_{1,0,0}(-\epsilon) - e^{-K_\epsilon}
  \sigma^{out}_{1,0,0}(2 \pi -\epsilon)   ) \notag \\
& \quad \quad + E
  (\sigma^{in}_{0,1,0}(-\epsilon) - e^{-K_\epsilon}
  \sigma^{out}_{0,1,0}(2 \pi -\epsilon)   ) \notag \\
& \quad \quad + F
  (\sigma^{in}_{0,0,1}(-\epsilon) - e^{-K_\epsilon}
  \sigma^{out}_{0,0,1}(2 \pi -\epsilon)   ) + \O(E^2 + h^2
  )\Big). \label{E:mono-amp}
\end{align}

Recalling \eqref{E:sigma-eps} and the definition of $e^{K_\epsilon}$,
we have the leading order equation
\[
-\frac{F}{2h} \log \left( \frac{ \epsilon + ( \epsilon^2
      +E)^{1/2} }{\sqrt{E} } \right) = -c_0(E) \frac{F}{2h} +
  \frac{ \sqrt{1 +E}}{2} c_1(a,E).
\]
Now if $E = \O(h)$, then we have already found that $F = \O( h / |
\log (E) | ) = \O(h/ | \log (h) | )$, so the term with $c_0$ is
$o(1)$.  Hence we want to solve
\[
\frac{c_1(a,E) \sqrt{1 +E} }{2} = - \frac{F}{4h} \log (E),
\]
or
\[
F = \frac{2 h c_1(a,E) }{ | \log (E) |} (1 + o(1)).
\]
This determines $F$.  Expanding $R_1$ in an asymptotic series in
$h,E,F$, we can solve for the initial conditions on the lower order
amplitude terms in \eqref{E:mono-amp}.  
This completes the proof of Proposition \ref{P:sigma}.

\end{proof}

We now show that Theorem \ref{theo-quasi} follows from Proposition
\ref{P:sigma}.  
\begin{proof}[Proof of Theorem \ref{theo-quasi}]
Let $v$ be as in the statment of Proposition \ref{P:sigma}.  The main
problem is that $v$, as constructed, does not live on the circle but on
the real line.  Nevertheless, since $v$ is almost periodic, we will
glue $v$ together with a shift by $2\pi$ to construct an honestly
periodic function.  
Choose $\chi \in \Ci_c([0, 2 \pi + \epsilon])$, $0 \leq \chi \leq 1$,
with $\chi(z) \equiv 1$ for $z \in [\epsilon, 2 \pi]$, and satisfying
\[
\chi(z) + \chi(z + 2 \pi) = 1 \text{ for } z \in [0, \epsilon].
\]
The function 
\[
u(z) = \sum_k \chi(z+ 2 \pi k) v(z + 2 \pi k )
\]
is $2 \pi$-periodic, so it is determined on any interval of length $2
\pi$, say $z \in [\epsilon, 2 \pi + \epsilon]$.  

On the interval $[\epsilon, 2 \pi]$, $\chi(z) \equiv 1$, and for any
$k \neq 0$, we have $\chi(z + 2 \pi k) = 0$.  Hence for $z \in [
\epsilon, 2 \pi]$, $u(z) = v(z)$.  On the other hand, for $z \in [2
\pi, 2 \pi + \epsilon]$, 
\begin{align}
u(z) & = \chi(z) v(z) + \chi(z - 2 \pi) v(z - 2 \pi) \notag  \\
& = \chi(z) v(z) + (1 - \chi(z )) v(z - 2 \pi) , \label{E:chi-prop}
\end{align}
by construction of $\chi$.  

We compute: 
\[
P_{\mu,a}^h u = \chi (z) P_{\mu,a}^h v(z) +  \chi(z-2 \pi ) P_{\mu,a}^h
v(z - 2 \pi ) + [P_{\mu,a}^h, \chi] (v(z) - v(z - 2 \pi ) ),
\]
where we have used \eqref{E:chi-prop} in the commutator term.  
The commutator has terms with $h^2 \chi''$ and $h \chi' h \p_z$.
Since $\chi'$ and $\chi''$ are both supported near $z = \epsilon$, we use the continuity conditions in Proposition
\ref{P:sigma} to get
\[
[P_{\mu,a}^h, \chi] (v(z) - v(z - 2 \pi ) ) = \O(h^2 h^{2 - \delta} )
+ \O(h^2 h^{1 - \delta} ).
\]
That $v(z)$ and $v(z - 2 \pi)$ are both quasimodes as in Proposition \ref{P:sigma} then implies
\[
P_{\mu,a}^h u = \O( h^{2 - \delta}).
\]
This is Theorem \ref{theo-quasi}.

\end{proof}

\subsection{Quasimodes imply sub-exponential damping}
In this section we prove Theorem~\ref{th.1}. Let us consider a sequence of quasimodes $\{u_j \}$
and
quasi-eigenvalues $\{ \tau_j \}$ (as constructed above) satisfying
\[
(-\tau_j^2 - \Delta_g + i \tau_j a(x) ) v_j = R_j,
\]
with 
\begin{equation}\label{eq.est}
\| R_j \|_{L^2} = \mathcal{O}_\epsilon (| \tau_j|^{ \epsilon}) \| v_j \|_{L^2}, \qquad \| v_j\|_{L^2} =1, \qquad \| \nabla_x v_j\|_{L^2} \sim |\tau_j| 
\end{equation}
for any $\epsilon>0$, and such that 
\begin{equation}\label{eq.lim}
\Re \tau_j \rightarrow + \infty,\qquad \Im \tau_j \sim c\log^{-1} ( \Re \tau_j), \qquad  j \rightarrow + \infty , \qquad c>0
\end{equation}
Let us consider $u_j$ the solution to the damped wave equation~\eqref{wave-equation-0} with initial data 
$$ (u_0=v_j, u_1= i\tau_j v_j).$$
 \begin{lemma}
There exists $C>0$ such that  for any $T>0$, 
$$\| u_j - e^{it \tau_j} v_j\|_{L^{\infty} ((0,T); H^1( M))}+ \| \partial_t u_j - i \tau_j e^{it \tau_j} v_j\|_{L^{\infty} ((0,T); L^2( M))}\leq C \log( |\tau_j|) \| R_j\|_{L^2}
$$
\end{lemma}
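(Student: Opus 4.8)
The plan is to estimate the difference $w_j := u_j - e^{it\tau_j} v_j$ by a Duhamel/energy argument, with the factor $\log(|\tau_j|)$ coming out of $\int_0^\infty |e^{it\tau_j}|\,dt = (\Im\tau_j)^{-1} \sim c^{-1}\log(\Re\tau_j)$. First I would record the equation for $w_j$: since $\partial_t$ acts as multiplication by $i\tau_j$ on $e^{it\tau_j}$ and $(-\tau_j^2 - \Delta_g + i\tau_j a)v_j = R_j$, the function $e^{it\tau_j}v_j$ solves
\[
(\partial_t^2 - \Delta_g + a(z)\partial_t)(e^{it\tau_j}v_j) = e^{it\tau_j}R_j ,
\]
and it has the same Cauchy data $(v_j, i\tau_j v_j)$ at $t = 0$ as $u_j$; hence $w_j$ solves the damped wave equation with zero Cauchy data and right-hand side $-e^{it\tau_j}R_j$. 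Note also that $\Im\tau_j > 0$ for $j$ large by \eqref{eq.lim}, so $|e^{is\tau_j}| = e^{-s\Im\tau_j} \leq 1$ for $s \geq 0$ (the finitely many exceptional $j$ are absorbed into the final constant).

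Next I would run the standard energy estimate on $\mathcal{E}_j(t) = \tfrac12(\|\partial_t w_j(t)\|_{L^2}^2 + \|\nabla w_j(t)\|_{L^2}^2)$: differentiating, integrating the gradient term by parts, and using $a \geq 0$ together with Cauchy--Schwarz gives $\tfrac{d}{dt}\mathcal{E}_j \leq \|e^{it\tau_j}R_j\|_{L^2}\sqrt{2\mathcal{E}_j}$, hence $\tfrac{d}{dt}\sqrt{\mathcal{E}_j} \leq \tfrac{1}{\sqrt2}\|e^{it\tau_j}R_j\|_{L^2}$, and since $\mathcal{E}_j(0) = 0$,
\[
\sqrt{\mathcal{E}_j(t)} \leq \frac{1}{\sqrt2}\int_0^t \|e^{is\tau_j}R_j\|_{L^2}\,ds = \frac{\|R_j\|_{L^2}}{\sqrt2}\int_0^t e^{-s\Im\tau_j}\,ds \leq \frac{\|R_j\|_{L^2}}{\sqrt2\,\Im\tau_j} \leq C\log(|\tau_j|)\,\|R_j\|_{L^2},
\]
uniformly in $t$ (hence in $T$), using \eqref{eq.lim}. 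This already bounds $\|\partial_t w_j\|_{L^\infty((0,T);L^2)}$ and $\|\nabla w_j\|_{L^\infty((0,T);L^2)}$; since $\partial_t u_j - i\tau_j e^{it\tau_j}v_j = \partial_t w_j$, it gives the second term in the statement.

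The one point requiring care --- and the step I expect to be the only real obstacle --- is upgrading $\|\nabla w_j\|_{L^2}$ to the full $H^1(M)$-norm of $w_j$, i.e.\ controlling $\|w_j(t)\|_{L^2}$: the energy functional does not see $w_j$ itself, and merely integrating $\|\partial_t w_j(s)\|_{L^2}$ in $s$ produces a bound growing with $T$. The remedy is that $w_j$ carries a single angular Fourier mode: $v_j = \psi_{\tau_j,k}(z)e^{ik\theta}$ with $k = k_j \in \ZZ$, $|k_j| \to \infty$, and $\Delta_g$, $a(z)\partial_t$, $P(\tau_j)$ are all Fourier multipliers in $\theta$, so $e^{is\tau_j}R_j = e^{is\tau_j}P(\tau_j)v_j$ is of the form (function of $s,z$)$\cdot e^{ik\theta}$; with zero Cauchy data this propagates to $w_j(t,z,\theta) = W_j(t,z)e^{ik\theta}$ for all $t$. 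Since $k \neq 0$, $w_j(t)$ is orthogonal to the constants, so the Poincaré inequality on $M$ gives $\|w_j(t)\|_{L^2} \leq C_M \|\nabla w_j(t)\|_{L^2}$ (in fact $\lesssim |\tau_j|^{-1}\|\nabla w_j(t)\|_{L^2}$, the $\theta$-frequency being $|k_j| \sim |\tau_j|$). Combined with the previous step, $\|w_j(t)\|_{H^1(M)} \leq C\log(|\tau_j|)\|R_j\|_{L^2}$ uniformly in $t$, and hence the claimed inequality.
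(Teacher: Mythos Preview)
Your proof is correct and follows essentially the same Duhamel/energy argument as the paper: set $w_j = u_j - e^{it\tau_j}v_j$, observe it solves the damped wave equation with zero data and forcing $e^{it\tau_j}R_j$, and use the contraction property of the damped-wave semigroup together with $\int_0^\infty e^{-s\Im\tau_j}\,ds \sim \log|\tau_j|$. Your extra Poincar\'e step (exploiting that $w_j$ lives on a single nonzero angular mode) to pass from $\|\nabla w_j\|_{L^2}$ to the full $H^1$ norm is a point the paper leaves implicit, and your treatment of it is correct.
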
  
\begin{remark}
In the following proof, we consider non-real quasimodes.  Of course
one can prove the same result for real-valued functions by taking the
real or imaginary parts of the quasimodes constructed below.  
\end{remark}

\begin{proof}
Indeed, the function $w_j = u_j- e^{it\tau_j} v_j$ satisfies 
$$ (\partial_t^2 - \Delta +  a(x) \partial_t ) w_j = e^{it \tau_j }R_j 
$$
and from the Duhamel formula and~\eqref{eq.lim} we get (here we use that the semi-group associated to the damped wave equation is a semi-group of contractions)
\begin{multline}
\| u_j - e^{it \tau_j} v_j\|_{L^{\infty} ((0,T); H^1( M))}+ \| \partial_t u_j - i \tau_j e^{it \tau_j} v_j\|_{L^{\infty} ((0,T); L^2( M))}\\
\leq\int_0 ^t \| e^{is\tau_j} R_j \|_{L^2} ds\leq \int_0 ^t | e^{- \frac{ s} {\log( |\tau|_j)}}|  ds \| R_j\|_{L^2},
\end{multline} 
which proves the lemma. 
\end{proof}

We are now ready to prove Theorem \ref{th.1}.  

\begin{proof}[Proof of Theorem \ref{th.1}]
Let $\delta>0$ be the derivative loss in the statement of Theorem
\ref{th.1}.   That is, for our choice of initial data $(u_0=v_j, u_1=
i\tau_j v_j)$, we have
\[
\| (u_0, u_1) \|_{H^{1 + \delta} \times H^\delta}^2 \sim | \tau_j |^{2
  + 2 \delta}.
\]

Using the previous Lemma, 
we deduce that for any $t>0$,
$$ E(u_j) (t) = | \tau_j|^2 \Bigl( e^{-2c\frac t { \log( |\tau_j|)}} + \mathcal{O} \bigl( |\tau_j|^{2\epsilon-2} \log^2( |\tau_j|)\bigr)\Bigr)\leq |\tau_j|^{2+ 2\delta} f(t)
$$ 
For fixed $t$, we optimize the estimate by choosing $j$ so that $ t \sim \frac{\epsilon} { 4c }\log^2 (|\tau_j|)$ and  we get 
$$ f(t)\geq \frac{|\tau_j|^{-2\delta}} 2 e^{- \frac \epsilon 2 \log( |\tau_j|)},$$
or equivalently,
$$   f(t)\geq e^{-c_{\delta, \epsilon} \sqrt{t}}.$$

\end{proof}

\section{Overdamping: the case of perfect geometric control}
\label{S:overdamping-I}
In this section, we prove that the presence of stronger damping
does not hurt anything in the case of perfect geometric control.
Specifically, we study the following problem.  Let $\Omega \subset
\reals^n$ be a bounded domain with smooth boundary.  Let $u$ be a
solution to the following over-damped wave equation:
\begin{equation}
\label{wave-equation-1}
\left\{ \begin{array}{l}
\left( \partial_t^2 - \Delta  -\div a(x) \nabla \partial_t \right) u(x,t) = 0, \quad (x,t) \in X \times (0, \infty) \\
u(x,0) = 0, \quad \partial_t u(x,0) = f(x).
\end{array} \right.
\end{equation}
We assume $a$ controls $\Omega$ geometrically:
\begin{equation}
\label{RT}
\left\{ \begin{array}{l} \text{There exists a time }T>0  \text{ such
      that for every } 
  (x, \xi) \in S^*\Omega , 
 \text{the (unit speed)} \\ \text{geodesic beginning at } (x,\xi), \,\, \gamma(t),  \text{ meets } \{ a>0\}     \text{ for some } |t| \leq T.
    \end{array} \right.
\end{equation}
We also require some estimates on $a$ near the set where $a = 0$.  We
assume there exists $k>2$ such that
\[
| \p^\alpha a | \leq C_\alpha a^{(k-|\alpha|)/k}, \,\, | \alpha | \leq
2.  
\]
This follows, for example, if there exists a defining function $x$ for
$\{ a > 0 \}$ such that $\p_x^3 a \geq 0$ (see \cite[Lemma 3.1]{BuHi-dw}).

Then we have the following Theorem.

\begin{theorem}
\label{T:wave-decay-1}
Let $u$ be a solution to \eqref{wave-equation-1} and assume $a \in
\Ci(\Omega)$ satisfies \eqref{RT}.  Then there exists a constant $C>0$
such that
\begin{equation}
\label{E:energy-decay-1}
\| \partial_t u \|_{L^2(\Omega)}^2 + \| \nabla u \|_{L^2(\Omega)}^2
\leq C e^{-t/C} \| f \|_{L^2(\Omega)}^2.
\end{equation}
\end{theorem}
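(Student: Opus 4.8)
The plan is to pass to the semigroup formulation and reduce the exponential decay \eqref{E:energy-decay-1} to a resolvent estimate on the imaginary axis. Write $U=(u,\partial_t u)$, so that \eqref{wave-equation-1} becomes $\partial_t U=\mathcal{A}U$ on the energy space $\HH=H^1_0(\Omega)\times L^2(\Omega)$ with $\mathcal{A}(u,v)=(v,\Delta u+\div(a\nabla v))$ (imposing, say, Dirichlet conditions). The identity $\tfrac{d}{dt}E(t)=-2\int_\Omega a|\nabla\partial_t u|^2\le 0$ shows $\mathcal{A}$ is dissipative, and one checks it is maximal dissipative, hence generates a contraction semigroup $e^{t\mathcal{A}}$ (the usual setting for viscous/Kelvin--Voigt damping, cf.\ \cite{ErZu-damp}). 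Since $e^{t\mathcal{A}}$ is bounded, by the Gearhart--Huang--Pr\"uss theorem \eqref{E:energy-decay-1} is equivalent to $i\reals\subset\rho(\mathcal{A})$ together with $\sup_{\tau\in\reals}\|(\mathcal{A}-i\tau)^{-1}\|_{\HH\to\HH}<\infty$. Solving $(\mathcal{A}-i\tau)(u,v)=(F,G)$ expresses $(u,v)$ through the stationary operator $P(\tau)=-\Delta-\tau^2-i\tau\,\div(a\nabla\cdot)=-\div((1+i\tau a)\nabla\cdot)-\tau^2$; as in \cite{Bur-wd} the resolvent bound is then equivalent to invertibility of $P(\tau)$ for every real $\tau$ plus a uniform bound of the appropriate (semiclassical Sobolev) resolvent norm, which, since it self-improves through the real part of the quadratic form $\langle P(\tau)v,v\rangle=\|\nabla v\|^2-\tau^2\|v\|^2-i\tau\int_\Omega a|\nabla v|^2$, reduces to bounding $\|P(\tau)^{-1}\|_{L^2\to L^2}$ for $|\tau|$ large.

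Invertibility of $P(\tau)$ is immediate for $\tau=0$; for $\tau\neq0$, $P(\tau)$ is Fredholm of index zero ($-\tau^2$ plus an operator with compact resolvent, $1+i\tau a$ being uniformly elliptic), so it suffices to rule out the kernel: $P(\tau)v=0$ forces, via the imaginary part of the quadratic form, $\nabla v\equiv0$ on $\{a>0\}$, whence $\div(a\nabla v)=0$ and $v$ is locally constant there, so $-\tau^2v=0$ and $v\equiv0$ on $\{a>0\}$, and unique continuation for $-\Delta-\tau^2$ gives $v\equiv0$. The crux is the large-$\tau$ estimate, which I would prove by contradiction. Suppose $\tau_j\to+\infty$ and $v_j$ with $\|v_j\|_{L^2}=1$, $\|P(\tau_j)v_j\|_{L^2}=o(\tau_j)$. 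The imaginary part of the quadratic form gives $\int_\Omega a|\nabla v_j|^2=o(1)$, i.e.\ $a^{1/2}\nabla v_j\to0$ in $L^2$; the real part gives $\|\nabla v_j\|_{L^2}^2=\tau_j^2+o(\tau_j)$, so with $h=\tau_j^{-1}$ the family $(v_j)$ is semiclassically bounded in $H^1$. Pass to a semiclassical defect measure $\mu$ on $\overline{T^*\Omega}$; no mass escapes to the boundary (standard for the Dirichlet problem on a bounded domain), so $\mu(\overline{T^*\Omega})=1$. Using the dissipation bound, $\div(a\nabla v_j)$ is controlled in $H^{-1}$ by $\|a^{1/2}\nabla v_j\|_{L^2}=o(1)$, so after the natural normalization the damping term is negligible in a weak sense and $(-h^2\Delta-1)v_j\to0$ in a suitable weak norm; hence $\mu$ is carried by the cosphere bundle $\{|\xi|=1\}$ and is invariant under the geodesic flow, with reflections at $\partial\Omega$ governed by the Melrose--Sj\"ostrand propagation theory. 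The same dissipation bound gives $\int a\,d\mu=0$, so $\supp\mu\subset\{a=0\}$. But then the entire geodesic through any point of $\supp\mu$ stays in $\{a=0\}$, contradicting the geometric control hypothesis \eqref{RT}. Hence $\mu\equiv0$, contradicting $\mu(\overline{T^*\Omega})=1$; this establishes the estimate and thus \eqref{E:energy-decay-1}.

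The main obstacle is making the preceding argument rigorous, and it has two sources. First, the damping is \emph{over}damping: the term $i\tau\,\div(a\nabla)$ dominates $-\Delta-\tau^2$ at high frequency, so one cannot naively rescale and invoke the standard semiclassical calculus — the damping operator must be handled in the $H^{-1}$ scale, where the dissipation bound $a^{1/2}\nabla v_j\to0$ makes it negligible on the sequence even though it is unbounded as an operator, and the propagation statement for $\mu$ must be justified in that framework. Second, and relatedly, this is precisely where the flatness hypothesis $|\p^\alpha a|\le C_\alpha a^{(k-|\alpha|)/k}$, $|\alpha|\le2$, $k>2$, enters: it makes $a^{1/2}$ an admissible symbol/weight and controls the commutators generated by the second-order, variable-coefficient operator $\div(a\nabla\cdot)$ near the interface $\{a=0\}$, so that the microlocal localization and positive-commutator (G\aa rding) estimates close uniformly up to $\{a=0\}$; without such flatness, exponential stability for viscous/Kelvin--Voigt damping is known to fail, and this is exactly the hypothesis used for such problems (cf.\ \cite{ErZu-damp,BuHi-dw}). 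A further, more routine point is the propagation of defect measures up to and along $\partial\Omega$ through glancing rays, which is compatible with the way \eqref{RT} is stated.
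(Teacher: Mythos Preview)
Your overall architecture matches the paper's: reduce the exponential energy decay to a high-frequency resolvent bound for the stationary operator $P(\tau)=-\div((1+i\tau a)\nabla)-\tau^2$, then argue by contradiction using semiclassical defect measures, invoking geometric control to kill the limiting measure. The genuine divergence is in the one hard step---how to tame the overdamping term $i\tau\,\div(a\nabla)$, which after rescaling $h=\tau^{-1}$ is of size $h^{-1}$ and too large for the standard calculus.

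The paper does \emph{not} try to treat this term directly in the measure argument. Instead it introduces a cutoff $\chi(a(x)/h)$ and splits $u=v+w$ with $v=\chi(a/h)u$ supported in $\{a\lesssim h\}$ and $w$ in $\{a\gtrsim h\}$. On the $w$ side the a priori identity $\Im\langle P u,u\rangle$ gives $h^{-1}\int a|h\nabla u|^2=o(1)$, and since $a\geq h/C$ there this already forces $\|w\|_{\Hsc^1}=o(1)$ with no microlocal analysis. On the $v$ side the damping coefficient is $h^{-1}a\,\tchi(a/h)=:b$, a \emph{bounded} function, so one is back to a standard bounded-coefficient problem and the usual defect measure argument (support on $S^*\Omega\cap\{b=0\}$, flow invariance) goes through verbatim. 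The flatness hypothesis $|\partial^\alpha a|\le C_\alpha a^{(k-|\alpha|)/k}$ with $k>2$ is used \emph{only} to show $[P(z,h),\chi(a/h)]=o(h)$, via the bounds $|\nabla a|=\O(h^{(k-1)/k})$ and $|\Delta a|=\O(h^{(k-2)/k})$ on $\{a\sim h\}$.

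Your route keeps the full sequence and proposes to absorb the large damping term in the propagation estimate via the dissipation bound $a^{1/2}\nabla v_j\to 0$ together with the fact that $a^{1/2}$ is an admissible symbol. This can be made to work (the dangerous term becomes $\langle a^{1/2}h\nabla(Av_j),a^{1/2}h\nabla v_j\rangle$, and commuting $a^{1/2}$ through $A$ costs $\O(h)$ once $a^{1/2}$ has bounded derivatives), but it is exactly the step you flag as ``the main obstacle'' and only sketch. The paper's cutoff sidesteps this commutator analysis entirely and converts the problem into one with bounded damping; what it buys is a cleaner, more modular argument in which the role of the flatness exponent $k>2$ is completely explicit. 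What your approach buys is a more direct path that does not introduce an auxiliary scale, at the cost of a more delicate propagation computation. Your remark about Melrose--Sj\"ostrand propagation at the boundary is well taken; the paper is silent on this point.
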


The proof uses semiclassical defect measures and a contradiction
argument to prove a resolvent estimate, similar to the proof of
\cite[Theorem 5.9]{Zwo-book}.  In order to prove the resolvent estimate,
we first formally cut off in time and take the Fourier transform to
get the equation
\begin{equation}
\label{wave-equation-2}
\tP(\lambda)\hat{u}(x, \lambda) :=(-\lambda^2 - \Delta - i\lambda \div  a(x)
\nabla ) \hat{u}(x,
\lambda) = f.
\end{equation}
We introduce a semiclassical parameter $h = (\Re \lambda )^{-1}$, and
set 
\[
\lambda^2 = \frac{z}{h^2},
\]
and upon rescaling are led to study the semiclassical equation
(abusing notation slightly)
\begin{equation}
\label{E:sc-dw-1}
P(z,h) u = g,
\end{equation}
where
\[
P(z,h) = (-h \div(1 + i h^{-1} \sqrt{z} a ) h \nabla - z ) ,
\]
and
\[
g = h^2 f.
\]
We recall the definition of the semiclassical Sobolev spaces on
$\Omega$ for integer $r$:
\[
\| u \|_{\Hsc^r (\Omega )}^2 = \sum_{|\alpha| \leq r} \| (h
D_x)^\alpha u \|_{L^2(\Omega )}^2.
\]
We have the following resolvent estimate.
\begin{proposition}
\label{P:sc-dw-prop-1}
There exist constants $h_0>0$, $\alpha >0$, and $C>0$ such that for $0
< h \leq h_0$ and 
\[
z \in [1 - \alpha, 1 + \alpha] + i(-\infty , h/C],
\]
the operator $P(z,h)$ is invertible as an operator $\Hsc^1(\Omega) \to
L^2(\Omega)$ and
\[
\| P(z,h)^{-1} g \|_{\Hsc^1(\Omega)} \leq \frac{C}{h} \| g
\|_{L^2(\Omega)}.
\]
\end{proposition}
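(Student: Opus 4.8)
The plan is to deduce Proposition~\ref{P:sc-dw-prop-1} from the a priori bound
\[
\|u\|_{\Hsc^1(\Omega)}\leq \frac{C}{h}\,\|P(z,h)u\|_{L^2(\Omega)},\qquad 0<h\leq h_0,\ \ z\in[1-\alpha,1+\alpha]+i(-\infty,h/C],
\]
valid for $u$ in the domain of $P(z,h)$ (Dirichlet data on $\p\Omega$), plus a Fredholm argument. Pairing $P(z,h)u$ with $u$ and integrating by parts gives the two identities
\[
\Im\lll P(z,h)u,u\rrr = \frac{\Re\sqrt z}{h}\,\|a^{1/2}h\nabla u\|_{L^2}^2-(\Im z)\,\|u\|_{L^2}^2,
\]
\[
\Re\lll P(z,h)u,u\rrr = \|h\nabla u\|_{L^2}^2-(\Re z)\|u\|_{L^2}^2-\frac{\Im\sqrt z}{h}\|a^{1/2}h\nabla u\|_{L^2}^2 .
\]
Since $\Re z\sim 1$ and $\Im\sqrt z=\O(\Im z)=\O(h)$, the second identity reduces the estimate to the $L^2$ bound $\|u\|_{L^2}\leq Ch^{-1}\|P(z,h)u\|_{L^2}$, which I would prove by contradiction. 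If it fails there are $h_n\leq h_0$, $z_n$ in the region, and $u_n$ with $\|u_n\|_{L^2}=1$ and $\|g_n\|_{L^2}:=\|P(z_n,h_n)u_n\|_{L^2}=o(h_n)$; the second identity makes $(u_n)$ bounded in $\Hsc^1$. From the first identity, combined with $\|u_n\|_{L^2}=1$, one reads off both $|\Im z_n|=o(h_n)$ (whatever the sign) and the crucial smallness $\|a^{1/2}h_n\nabla u_n\|_{L^2}=o(h_n)$ --- much stronger than the naive $\O(h_n)$. A short Rellich argument forces $h_n\to 0$: otherwise $h_n\to h_*>0$, $u_n\to u_*$ in $L^2$ with $\|u_*\|_{L^2}=1$, $P(z_*,h_*)u_*=0$, $\Im z_*=0$, so $a^{1/2}\nabla u_*=0$; then $u_*$ solves $-\Delta u_*=(z_*/h_*^2)u_*$ with $\nabla u_*\equiv 0$ on the open set $\{a>0\}$, hence $u_*\equiv 0$ by unique continuation, a contradiction. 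With $h_n\to 0$ and $z_n\to z_\infty\in[1-\alpha,1+\alpha]\subset(0,\infty)$, testing the equation against a fixed smooth $\varphi$ and using $z_\infty\ne 0$ shows the weak $L^2$-limit of $(u_n)$ is $0$, so $(u_n)$ carries a nonzero semiclassical defect measure $\mu\geq 0$, supported on the compressed characteristic variety over $\overline\Omega$ and of total mass $1$.

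With $\mu$ in hand the argument follows the standard geometric-control/defect-measure scheme, as in the proof of \cite[Theorem 5.9]{Zwo-book}; the only new point is to absorb the $\O(h^{-1})$ Kelvin--Voigt term $D_{\mathrm{damp}}:=ih^{-1}\sqrt z\sum_j hD_j\,a\,hD_j$. \emph{Elliptic localization.} Rewriting the equation as $((h_nD)^2-z_n)u_n=g_n-D_{\mathrm{damp}}u_n$ and testing against $b^wu_n$ for a symbol $b$ supported away from $\Sigma:=\{|\xi|^2=z_\infty\}$, the right-hand side is negligible because $h_n^{-1}|\lll a\,h_n\nabla u_n,\,h_n\nabla(b^wu_n)\rrr|\leq h_n^{-1}\|a^{1/2}h_n\nabla u_n\|\,\|a^{1/2}h_n\nabla(b^wu_n)\|=h_n^{-1}\,o(h_n)\,\O(h_n)=o(h_n)$; ellipticity of $(h_nD)^2-z_n$ off $\Sigma$ then gives $\supp\mu\subset\Sigma$. \emph{Absorption.} Since $\sum_j hD_j\,a\,hD_j$ has principal symbol $a|\xi|^2$, one gets $\int a|\xi|^2\,d\mu=\lim\|a^{1/2}h_n\nabla u_n\|_{L^2}^2=0$; as $|\xi|^2=z_\infty>0$ on $\Sigma$, this forces $\supp\mu\subset\Sigma\cap\{a=0\}$. \emph{Propagation.} Testing the equation against $b^wu_n$ for real $b$ and isolating the $\O(h_n)$ term, the damping commutator is again $o(h_n)$ by the same estimate, so one obtains the invariance $\int\{|\xi|^2,b\}\,d\mu=0$; including reflections at $\p\Omega$ via Melrose--Sj\"ostrand generalized bicharacteristics, $\mu$ is invariant under the generalized geodesic flow at energy $z_\infty$. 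Then $\supp\mu$ is a union of complete generalized geodesics lying entirely in $\{a=0\}$, impossible by the geometric control hypothesis \eqref{RT} unless $\mu=0$ --- contradicting that $\mu$ has mass $1$. This proves the $L^2$ bound, hence the a priori bound.

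To finish, observe that for $\alpha$ and $1/C$ small the top-order part $-h\,\div((1+ih^{-1}\sqrt z a)h\nabla\cdot)$ is uniformly elliptic with ellipticity constant $\geq \tfrac12$, hence an isomorphism $H^1_0(\Omega)\to H^{-1}(\Omega)$, while $-z$ is $H^1_0\to H^{-1}$ compact; thus $P(z,h)$ is Fredholm of index $0$. The a priori bound gives injectivity, so $P(z,h)$ is invertible, and restricting to the stated functional setting yields $\|P(z,h)^{-1}\|_{L^2\to\Hsc^1}\leq C/h$.

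The step I expect to be the main obstacle is making rigorous the slogan that the $\O(h^{-1})$ damping term is negligible everywhere it occurs: this rests on the weighting by $a$ together with $\|a^{1/2}h_n\nabla u_n\|_{L^2}=o(h_n)$, but also requires auxiliary bounds such as $\|a^{1/2}h_n\nabla(b^wu_n)\|_{L^2}=\O(h_n)$ and the possibility of commuting powers of $a$ past semiclassical pseudodifferential operators uniformly in $h$. This is exactly where the hypothesis $|\p^\alpha a|\leq C_\alpha a^{(k-|\alpha|)/k}$ ($k>2$, $|\alpha|\leq 2$) --- which makes $a^{1/2}$ Lipschitz with controlled second derivatives --- is used, through the symbolic and commutator estimates of \cite[Lemma 3.1]{BuHi-dw}. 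A secondary technical point is the boundary behaviour (glancing rays, and existence/uniqueness of generalized bicharacteristics underlying the propagation and geometric control arguments), handled by the classical Melrose--Sj\"ostrand theory.
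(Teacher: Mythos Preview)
Your argument is correct but takes a genuinely different route from the paper's.  The paper does \emph{not} carry the $\O(h^{-1})$ Kelvin--Voigt term through the defect-measure argument; instead it uses the Burq--Hitrik cutoff $\chi(a(x)/h)$ to split $u_n=v_n+w_n$ with $v_n=\chi(a/h)u_n$ supported where $a\lesssim h$.  The piece $w_n$ (living where $a\gtrsim h$) is disposed of directly by the two a priori identities, while $v_n$ solves an equation whose effective damping coefficient $b=h^{-1}a\,\tchi(a/h)$ is \emph{uniformly bounded} in $h$, so the defect-measure step reduces verbatim to the standard bounded-damping case of \cite[Theorem~5.9]{Zwo-book}.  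The symbol-like hypothesis $|\p^\alpha a|\leq C_\alpha a^{(k-|\alpha|)/k}$ enters only in estimating the commutator $[P,\chi(a/h)]u_n$.  By contrast, you keep the full operator and absorb the large damping at every step via Cauchy--Schwarz with weight $a^{1/2}$, the key smallness $\|a^{1/2}h_n\nabla u_n\|=o(h_n)$, and the commutator bound $[a^{1/2},b^w]=\O(h_n)$ (this is where you spend the hypothesis on $a$).  The paper's decomposition buys modularity---a clean reduction to a known theorem---while your approach buys a single unified argument with no cutoff bookkeeping.  One small correction: you cannot ``read off'' $|\Im z_n|=o(h_n)$ from the imaginary-part identity alone when $\Im z_n>0$, since the two terms then have opposite signs; rather, $\Im z_n\leq h_n/C_n$ with $C_n\to\infty$ comes from negating the existential quantifier on $C$ in the statement, after which the identity does yield $\|a^{1/2}h_n\nabla u_n\|=o(h_n)$.
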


\begin{proof}
We have to prove there is a range of $z$ as in the proposition such
that if $u$ satisfies \eqref{E:sc-dw-1}, then
\begin{equation}
\label{E:res-est-1}
\| u \|_{L^2} + \| h \nabla u \|_{L^2} \leq \frac{C}{h} \| g \|_{L^2}.
\end{equation}

We first record some a priori estimates which we will use later in the
proof.  We multiply \eqref{E:sc-dw-1} by $\bar{u}$, integrate by
parts, recall $h =  (\Re \lambda)^{-1}$ and $z = h^2 \lambda^2$, and take real and imaginary parts to get the following two
identities:
\begin{equation}
\label{E:apriori-1a}
  \int | h \nabla u |^2 dx 
-\Im \sqrt{z} h^{-1} \int a | h \nabla u |^2 dx 
-\Re z \int | u |^2 dx 
= \Re \int g \bar{u} dx,
\end{equation}
and
\begin{equation}
\label{E:apriori-1b}
h^{-1}\Re\sqrt{z} \int a | h \nabla u |^2 dx 
-  \Im z \int | u |^2 dx 
=
\Im \int g \bar{u} dx.
\end{equation}

Now for the purpose of deriving a contradiction, assume
\eqref{E:res-est-1} is false, and let $u_n$ be a sequence in $\Hsc^1$
satisfying 
\[
P(z_n,h_h) u_n = g_n,
\]
with $h_n \to 0$, $\Re (z -1) = o(1)$, $\Im z = o(h_n)$, 
\[
\| u_n \|_{L^2} + \| h \nabla u_n \|_{L^2} = 1, 
\]
and
\begin{equation}
\label{E:gn-assump-1}
\| g_n \|_{L^2} = o(h).
\end{equation}

The damping term $ih^{-1} (h \div   \sqrt{z} a  h \nabla) $ in
\eqref{E:sc-dw-1} is too large to control at first inspection, so,
following \cite{BuHi-dw}, we
introduce a cutoff to the set where $a \leq h$ to control this term.
Choose $\chi \in \Ci(\reals)$, $\chi \equiv 1$ near $0$ with small
support, and let 
\[
v_n = \chi(a(x)/h) u_n, \,\, w_n = u_n - v_n.
\]
That is, $v_n$ is the part of $u_n$ localized to the set where $a \leq
h/C$ and $w_n$ is the complement.  We examine two cases and prove a
contradiction in each case.  

{\bf Case 1.}  Assume there is a subsequence $\{ w_{n_k} \}$ of the $w_n$ and a real
number $\eta>0$ independent of $h$ so that $\| w_{n_k} \|_{\Hsc^1}
\geq \eta$.  Dropping the sequence notation and renormalizing in
$\Hsc^1$, we consider $w$ satisfying 
the following equation:
\begin{equation}
\label{E:sc-dw-2}
P(z,h) w = (1-\chi(a/h)) g + [P(z,h), (1-\chi(a/h))]u ,
\end{equation}
where
\[
P(z,h) = (-h \div(1 + i h^{-1} \sqrt{z} a ) h \nabla - z ) 
\]
as before.  We claim the right hand side is still $o(h)$ in $L^2$.
The first term is clearly $o(h)$ since $g$ is and we have multiplied
$w$ by a bounded constant.  For the second term,
choose coordinates so that $x$ is a defining function for the support
of $a$, so that $a = \O(x^k)$ for some $k$ sufficiently large and
$\nabla a = \O(x^{k-1})$.  From this we have that on the set where $a
= \O(h)$, $\nabla a = \O(x^{k-1}) = \O((x^k)^{(k-1)/k}) = \O(h^{(k-1)/k})$.
Then taking the commutator gives 
\begin{align}
[P(z,h), (1-\chi(a/h))]u  = & h\div [(1 + i h^{-1} \sqrt{z} a) \chi'(a/h)
(\nabla a) u] \notag \\
& + \sum_j [h \partial_j (1 + i h^{-1} \sqrt{z} a),
\chi(a/h)] h \partial_j u \notag \\
 = & i \sqrt{z} ( \chi'(a/h) | \nabla a|^2 u +  a \chi''(a/h) h^{-1}  |
\nabla a |^2 u \notag \\
& + a \chi'(a/h) (\Delta a) u + 2 \sum_j a h^{-1}
\chi'(a/h) \partial_j a h \partial_j u  ) . \label{E:P-chia-comm}
\end{align}
To estimate the first two terms, we use
\[
\| |\nabla a |^2 u \|_{L^2} \leq C h^{2(k-1)/k} \| u \|_{L^2} =
o(h)
\]
if $(k-1)/k >1/2$ since $\| u \|_{\Hsc^1}$ is bounded.  For
the third term, we use that, on the support of $\chi'(a/h)$, $a \sim
h$, so $a \Delta a = \O(h^{1 + (k-2)/k}) = o(h)$ provided $k-2>0$.
For the last term, we use again that $a \sim h$ on the support of
$\chi'(a/h)$ so that
\begin{align*}
\| |a h^{-1}  \chi'(a/h) \nabla a | | h \nabla u | \|_{L^2} & \leq C
h^{(k-1)/k} \| h \nabla u \|_{L^2( \{ a \sim h \})} \\
& \leq C h^{-1/2 + (k-1)/k} \left( \int a | h \nabla u |^2 dx
\right)^{1/2} \\
& \leq C h^{(k-1)/k } \left( \Im \int g \bar{u} dx \right)^{1/2} + o(h)\\
& = o(h^{1/2}) h^{(k-1)/k },
\end{align*}
where we have used the a priori estimates, the fact that $u$ is
bounded, and that $g = o(h)$ in $L^2$.  Since we have already
assumed $(k-1)/k >1/2$, every term in the commutator is $o(h)$ as
claimed.  

We now have functions $w$ and $\tg$ such that $w$ is normalized in
$\Hsc^1$, $\| \tg \|_{L^2} = o(h)$, and $P(z,h) w = \tg$.  Plugging
into the a priori estimate \eqref{E:apriori-1b}, and using that $w$ is
supported where $a \geq h/C$ and $\Re z \sim 1$, we get 
\begin{align*}
\int |h \nabla w |^2 dx & \leq C \Re z h^{-1} \int a | h \nabla w |^2 dx \\
& = C \Im z \int | w |^2 dx + \Im \int \tg \bar{w} dx \\
& = o(h),
\end{align*}
since $\Im z$ and $\tg$ are both $o(h)$.  Now plugging this estimate into the a
priori estimate \eqref{E:apriori-1a} we get
\begin{align*}
\int | w |^2 dx & \leq C \Re z \int | w |^2 dx \\
& = C \left( \int (1  - \Im \sqrt{z} h^{-1}a)| h \nabla w |^2 dx -
  \Re \int \tg \bar{w} dx \right) \\
& = o(h).
\end{align*}
All told then we have shown $\| w \|_{\Hsc^1}^2 = o(h)$, which is a contradiction.

{\bf Case 2.}  We now assume there is a subsequence $\{ v_{n_k} \}$ of the $v_n$ and a real
number $\eta>0$ independent of $h$ so that $\| v_{n_k} \|_{\Hsc^1}
\geq \eta$.  Dropping the sequence notation and renormalizing in
$\Hsc^1$, we consider $v$ satisfying 
the following equation:
\begin{equation}
\label{E:sc-dw-3}
P(z,h) v = \chi(a/h) g + [P(z,h), \chi(a/h)]u.
\end{equation}
We have already computed the commutator is $o(h)$, so as in Case 1 we
consider \eqref{E:sc-dw-3} with the right hand side replaced by a
function $\tg = o(h)$ in $L^2$.  We claim again there is a
contradiction.  For this we construct semiclassical defect measures
for solutions to this equation.

We consider a slightly more general operator:
\[
\tP(z,h) = (-h \div(1 + i  \sqrt{z} b) h \nabla - z ) ,
\]
where $b$ is a bounded, non-negative function of $x$.  Assume there is
an $h$-dependent family of functions $v$ satisfying $\|v \|_{\Hsc^1} = 1$, and
\begin{equation}
\label{E:tP=o}
\tP(z,h) v = o(h).
\end{equation}

Let $\mu$ be the semiclassical defect measure associated to the
sequence $u_n$.  We claim the measure $\mu$ has the following
properties:
\begin{align}
& \text{ (i) } \supp \mu \subset \{ |\xi|^2 =1 \} \cap \{ b = 0 \},
\text{ and} \notag \\
& \text{(ii) } \mu \text{ is invariant under the geodesic flow}. \label{E:mu-prop}
\end{align}
To prove \eqref{E:mu-prop}(i), we use elliptic regularity: if
\[
p = |\xi|^2(1 + ib(x)) -1
\]
is the principal symbol of $\tP(z,h)$ and
$a(x,\xi) \in \Ci_c(T^*\Omega)$ is supported away from $\{ | \xi|^2 =
1 \} \cap \{ b = 0 \}$, then we can find $\chi \in \Ci_c(T^*\Omega)$
so that $\supp \chi \cap \supp a = \emptyset$ and
\begin{align*}
| p + i\chi(x,\xi)|\xi|^2 | & = 
\left| | \xi |^2-1 + i( b(x) + \chi(x,\xi)) | \xi |^2
\right|\\
 & \geq \lll \xi \rrr^2/C.
\end{align*}
Then 
\[
\frac{ap}{p+i \chi | \xi|^2} - a = \frac{-ia \chi}{p+i\chi|\xi|^2} = 0,
\]
and the symbol calculus combined with \eqref{E:tP=o} implies the
support properties of $\mu$ (see \cite[Theorem 5.3]{Zwo-book}).

To prove \eqref{E:mu-prop}(ii), we take $A \in \Ci_c(T^*\Omega)$ and
compute the commutator:
\begin{align*}
h^{-1} \lll [-h^2\Delta -z,A] v ,v\rrr & =  h^{-1} \lll Av,
(-h^2\Delta -\bar{z})
v \rrr - h^{-1}\lll (-h^2\Delta -z) v, A^* v \rrr  \\
& =  h^{-1} \lll Av, {\tg} +i(h\sqrt{z} \div b(x) h \nabla +2 \Im z)v
\rrr \\
& \quad  - h^{-1}
\lll \tg + ih\sqrt{z} \div b(x) h \nabla v, A^* v \rrr \\
& =:  h^{-1} \lll A v , \tg + 2 \Im z v \rrr - h^{-1} \lll \tg, A^* v \rrr + A_1 + A_2 \\
& =  o(1) + A_1 + A_2.
\end{align*}
To estimate $A_1$, we integrate by parts and take yet another
commutator to get
\begin{align*}
\left| h^{-1} \int Av h\div b(x) h \nabla \bar{v} dx \right| \leq  &
\left| - h^{-1} \int  (A
b^{1/2} h
\nabla v)
(b^{1/2} h \nabla \bar{v}) dx \right| \\ & +\left| h^{-1} \int h \nabla b^{1/2} [b^{1/2}h \nabla, A] v
\bar{v} dx \right| \\
\leq & C h^{-1} \int b | h \nabla v |^2 dx + \left| \int B(x,hD_x) v
  \bar v dx \right| 
\end{align*}
for a compactly supported, zero order symbol $B(x, \xi)$ which is supported in $\{ b >0
\}$.  The first term is $o(1)$ by the a priori estimates for $\tP$,
and the second term is $o(1)$ by the support properties of $\mu$
proved in \eqref{E:mu-prop}(i).  The estimate for $A_2$ is similar.  Hence
\[
\int_{T^*\Omega} \{ | \xi |^2 -1 ,A(x,\xi) \} d \mu = 0,
\]
or $\mu$ is flow-invariant as claimed.

Now we return to the problem at hand where $b = h^{-1} a \tchi(a/h)$,
where $\tchi$ is a compactly supported smooth function such that
$\tchi \equiv 1$ on $\supp \chi$, where $\chi$ is the cutoff for the
family $v = \chi(a/h) u$.  Using the standard argument to ``average
over geodesics'' (see, for example, \cite[Theorem 5.9]{Zwo-book}), we
conclude that, under the assumption of perfect geometric control, the
sequence $v_{n_k} = o(1)$ in $\Hsc^1$, which is a contradiction.

Hence returning to the original sequence, before localizing to $\{ a
\leq h/C \}$, we have
\[
\| u_n \|_{\Hsc^1(\Omega)}  = o(1),
\]
which is a contradiction to the normalization of $u_n$.

\end{proof}

The proof of Theorem \ref{T:wave-decay-1} now follows exactly as
the proof of \cite[Theorem 5.10]{Zwo-book}.
\qed

\section{Overdamping: the case of imperfect control}
\label{S:overdamping-II}

In this section, our assumption is that $\Omega$ is a Euclidean domain
outside a compact set $\tV$ and that $a$ controls $\Omega$
geometrically outside a subset $V \subset \tV$.  We further make what
amounts to a ``black box'' assumption, that if we continue $\Omega$ to
a scattering manifold then the semiclassical resolvent with absorbing
potential satisfies a
polynomial bound in an $h$ sized strip.  Then using the black box
framework of \cite{BuZw-bb} we have an estimate for a damped wave
operator with {\it fixed} size damping on $\Omega$.  Using the
techniques of the previous section we show this implies the same
estimate for the overdamped operator.  

We assume our domain has compact subsets 
\[
V \Subset \tV \Subset \Omega
\]
satisfying 
\[
\tOmega = \overline{ \Omega \setminus \tV }
\]
is a compact subset of
$\reals^n$ and $a$ controls $\Omega$ geometrically outside $V$.  This
implies that $\Omega$ can be extended to an asymptotically Euclidean
scattering manifold, say
\[
X = (\reals^n \setminus U) \cup \tV,
\]
where $U \Subset \reals^n$ and $\partial U = \partial \tV$.  We assume
the semiclassical resolvent with absorbing potential 
\[
Q(h,z) = -h^2 \Delta -z + iW
\]
satisfies polynomial cutoff estimates for energies in a small complex strip $z
\in [1-\alpha, 1 + \alpha] + i(-c_0h, c_0h)$.  That is, if $W \in
\Ci(X)$, $W= 1$ outside a
small neighbourhood of $\tV$ and $W = 0$ on $\tV$ and $\chi \in
\Ci_c(X)$, then we assume
\begin{equation}
\label{E:sc-res-est-W}
\| \chi Q(h,z)^{-1} \chi u \|_{\Hsc^1(X)} \leq Ch^{-1-\delta} \| u
\|_{L^2(X)}
\end{equation}
for some $1 > \delta \geq 0$ and $z \in [1 - \alpha, 1 + \alpha] +
i(-c_0h, c_0h)$.

As in the previous section, we consider $u$ a solution to the overdamped
wave equation \eqref{wave-equation-1} in $\Omega$, for which we have
the following energy decay theorem.

\begin{theorem}
\label{T:wave-decay-2}
Let $u$ be a solution to \eqref{wave-equation-1} with $\Omega$
satisfying the assumptions in \S \ref{S:overdamping-II}, and assume $a \in
\Ci(\Omega)$ controls $\Omega$ geometrically outside $V$.  Then for
every $\epsilon>0$, there exists a constant $C>0$
such that
\begin{equation}
\label{E:energy-decay-2}
\| \partial_t u \|_{L^2(\Omega)}^2 + \| \nabla u \|_{L^2(\Omega)}^2
\leq C e^{-t/C} \| f \|_{H^\epsilon(\Omega)}^2.
\end{equation}
\end{theorem}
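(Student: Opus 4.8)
The plan is to reduce \eqref{E:energy-decay-2} to a semiclassical resolvent estimate for the stationary overdamped operator, and then to prove that estimate by the scheme of \S\ref{S:overdamping-I}, feeding in the black box hypothesis \eqref{E:sc-res-est-W} (via \cite{BuZw-bb}) wherever perfect geometric control was used there. Concretely: formally cut off for $t\leq 0$, Fourier transform in time, set $h=(\Re\lambda)^{-1}$ and $z=h^2\lambda^2$, and rescale exactly as in \S\ref{S:overdamping-I}. This reduces \eqref{E:energy-decay-2} to the statement that the overdamped operator $P(z,h)=-h\div(1+ih^{-1}\sqrt{z}\,a)h\nabla-z$ is invertible $\Hsc^1(\Omega)\to L^2(\Omega)$ with
\[
\|P(z,h)^{-1}g\|_{\Hsc^1(\Omega)}\leq \frac{C}{h^{1+\delta}}\|g\|_{L^2(\Omega)}
\]
for $z\in[1-\alpha,1+\alpha]+i(-\infty,h/C]$ and $0<h\leq h_0$. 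Transcribed back to $\lambda$, this is a resolvent bound for $\tP(\lambda)=-\lambda^2-\Delta-i\lambda\div a\nabla$ in an $h$-sized strip below the real axis with a polynomial loss governed by $\delta$; exponential energy decay with the corresponding $H^{\delta}$ --- hence $H^\epsilon$, once $\delta$ is taken as small as the trapping in $V$ permits (arbitrarily small when, e.g., $V$ contains a single hyperbolic orbit) --- loss of derivatives then follows from the standard semigroup/contour-shifting argument, as in \cite{Bur-wd} and the adaptation in \cite{Chr-wave-2}.

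To prove the resolvent estimate I would argue by contradiction and follow the decomposition used for Proposition \ref{P:sc-dw-prop-1}. Assume a sequence $u_n$ with $P(z_n,h_n)u_n=g_n$, $\|u_n\|_{\Hsc^1}=1$, $\|g_n\|_{L^2}=o(h_n^{1+\delta})$, $\Re(z_n-1)=o(1)$, $\Im z_n=o(h_n)$, and split $u_n=v_n+w_n$ with $v_n=\chi(a/h_n)u_n$ supported where $a\lesssim h_n$ and $w_n=(1-\chi(a/h_n))u_n$ supported where $a\gtrsim h_n$. Using $|\p^\alpha a|\leq C_\alpha a^{(k-|\alpha|)/k}$ ($k>2$), the commutator $[P(z,h),\chi(a/h)]$ is lower order, so each of $v_n,w_n$ satisfies $P(z_n,h_n)(\cdot)=o(h_n)$ in $L^2$ up to this harmless term. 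On $w_n$ the overdamping coefficient $h^{-1}\sqrt{z}\,a$ is bounded below, so the a priori identities \eqref{E:apriori-1a}--\eqref{E:apriori-1b} force $\|w_n\|_{\Hsc^1}=o(1)$, exactly as in Case 1 of \S\ref{S:overdamping-I}.

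On $v_n$ the overdamping term is only $\O(1)$, so $P(z,h)v_n$ coincides, up to an $\O(1)$-sized non-negative absorbing term supported where $a>0$, with the semiclassical operator of a \emph{fixed}-size damped wave equation on $\Omega$. Since $a$ controls $\Omega$ geometrically outside $V$ and the black box bound \eqref{E:sc-res-est-W} holds, the black box framework of \cite{BuZw-bb} applies to this operator and yields a resolvent estimate with loss $h^{-1-\delta}$; in the contradiction setup it gives $\|v_n\|_{\Hsc^1}=o(1)$. Combined with the $w_n$ estimate this contradicts $\|u_n\|_{\Hsc^1}=1$, and undoing the contradiction produces the displayed resolvent bound.

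The main obstacle is the $v_n$ piece. In \S\ref{S:overdamping-I}, Case 2, one concluded $v_n=o(1)$ by building a semiclassical defect measure supported on $\{|\xi|^2=1\}\cap\{b=0\}$ and invariant under the geodesic flow, and then \emph{using perfect geometric control} to see it must vanish; here geometric control fails inside $V$, so a priori such a measure could concentrate on the trapped set in $V$, and one must instead invoke \eqref{E:sc-res-est-W} through \cite{BuZw-bb} to kill it. The delicate points are: (i) checking that the $h$-dependent, $\O(1)$-size absorbing term $h^{-1}\sqrt{z}\,a\,\tchi(a/h)$ produced by the overdamping cutoff is a harmless perturbation of the fixed absorbing potential $W$ in $Q(h,z)$, so that \cite{BuZw-bb} genuinely applies to $v_n$; (ii) matching the geometric control of $a$ outside $V$ with the hypotheses of the black box gluing; and (iii) keeping the bookkeeping on the resolvent loss clean, so that only the exponent $\delta$ (and hence $\epsilon$) survives into the final energy estimate. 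The passage from the strip resolvent bound back to exponential decay with a fractional derivative loss is itself nontrivial and requires the care of \cite{Bur-wd,Chr-wave-2}.
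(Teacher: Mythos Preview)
Your proposal is correct and follows essentially the same route as the paper: reduce to the resolvent estimate of Proposition \ref{P:sc-dw-prop-2}, argue by contradiction with the same $v_n/w_n$ decomposition, dispose of $w_n$ via the a priori identities \eqref{E:apriori-1a}--\eqref{E:apriori-1b}, and dispose of $v_n$ by invoking the black box framework of \cite{BuZw-bb} in place of the defect-measure/geometric-control step. One small slip to fix: after splitting you need each piece to satisfy $P(z_n,h_n)(\cdot)=o(h_n^{1+\delta})$, not merely $o(h_n)$, so that the black box bound $\|v_n\|_{\Hsc^1}\leq Ch_n^{-1-\delta}\|\tP v_n\|_{L^2}$ actually gives $\|v_n\|_{\Hsc^1}=o(1)$; the paper secures this by taking $k$ large enough that the commutator \eqref{E:P-chia-comm} is $o(h^{1+\delta})$, whereas your hypothesis $k>2$ only gives $o(h)$.
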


\begin{remark}
The assumptions of Theorem \ref{T:wave-decay-2} are satisfied in
several settings.  Extending the example of \cite{CdVP-I} to be Euclidean
outside a compact set satisfies these assumptions, as well as the
cases studied in \cite{Chr-NC,Chr-NC-erratum,Chr-QMNC}.  
\end{remark}

The proof of Theorem \ref{T:wave-decay-2} is very similar in spirit to
the proof of Theorem \ref{T:wave-decay-1}.  We again formally cut off
in time and rescale to get a semiclassical operator as in
\eqref{E:sc-dw-1}.  We have the following estimate on the operator $P(z,h)$.

\begin{proposition}
\label{P:sc-dw-prop-2}
Under the assumptions of Theorem \ref{T:wave-decay-2}, there exist constants $h_0>0$, $\alpha >0$, and $C>0$ such that for $0
< h \leq h_0$ and 
\[
z \in [1 - \alpha, 1 + \alpha] + i(-\infty , h/C],
\]
the operator $P(z,h)$ is invertible as an operator $\Hsc^1(\Omega) \to
L^2(\Omega)$ and
\[
\| P(z,h)^{-1} g \|_{\Hsc^1(\Omega)} \leq \frac{C}{h^{1 + \delta}} \| g
\|_{L^2(\Omega)},
\]
where $0 \leq \delta <1$ is given in \eqref{E:sc-res-est-W}.
\end{proposition}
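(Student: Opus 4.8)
The plan is to follow the proof of Proposition~\ref{P:sc-dw-prop-1} as closely as possible, substituting at the last step the black box hypothesis \eqref{E:sc-res-est-W} together with the gluing argument of \cite{BuZw-bb} for the semiclassical-measure-and-geometric-control step used there. As before, it suffices to prove the a priori bound $\|u\|_{\Hsc^1(\Omega)}\le Ch^{-1-\delta}\|P(z,h)u\|_{L^2(\Omega)}$ on the stated set of $z$; invertibility then follows from the same bound for the adjoint (which has the identical form with $\bar z$ in place of $z$) and the usual Fredholm argument. The region $\Im z\le -C_0h$ is handled directly: since $a\ge0$ and $\Re z>0$, taking imaginary parts gives $\Im\lll P(z,h)u,u\rrr=-\Im z\,\|u\|_{L^2}^2+\Re(\sqrt z)h^{-1}\int a|h\nabla u|^2\,dx\ge|\Im z|\,\|u\|_{L^2}^2$, so $\|u\|_{L^2}\le|\Im z|^{-1}\|P(z,h)u\|_{L^2}$, and taking real parts, using $\Im\sqrt z<0$, then controls $\|h\nabla u\|_{L^2}$; for $|\Im z|\ge C_0h$ this is even stronger than claimed. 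Hence we may assume $|\Im z|=O(h)$ and argue by contradiction with a sequence $h_n\to0$, $z_n$ in the region with $\Re z_n\to1$ and $|\Im z_n|=O(h_n)$, and $u_n$ normalized by $\|u_n\|_{\Hsc^1}=1$, with $P(z_n,h_n)u_n=g_n$ and $\|g_n\|_{L^2}=o(h_n^{1+\delta})$. The two a priori identities \eqref{E:apriori-1a}--\eqref{E:apriori-1b} hold verbatim for $P(z,h)$, and together with $|\Im z_n|=O(h_n)$ and the improved smallness of $g_n$ they give the sharpened bound $\int a|h\nabla u_n|^2\,dx=O(h_n^2)$.

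Split $u_n=v_n+w_n$ with $v_n=\chi(a/h_n)u_n$ localized to $\{a\lesssim h_n\}$ and $w_n$ to $\{a\gtrsim h_n\}$, exactly as in the proof of Proposition~\ref{P:sc-dw-prop-1}. The commutator $[P(z,h),\chi(a/h)]u_n$ is computed as in \eqref{E:P-chia-comm}; feeding in the sharpened bound $\int a|h\nabla u_n|^2=O(h_n^2)$ and the vanishing estimates on $a$ near $\{a=0\}$ from \S\ref{S:overdamping-I} (assumed of sufficiently high order in terms of $\delta$) shows the commutator is $o(h_n^{1+\delta})$. If $\|w_{n_k}\|_{\Hsc^1}\ge\eta>0$ along a subsequence, then after renormalizing, $w$ solves \eqref{E:sc-dw-2} with right-hand side $o(h)$ in $L^2$, is supported where $a\gtrsim h$ and $\Re z\sim1$, and the a priori identities force $\|w\|_{\Hsc^1}^2=o(1)$, a contradiction; this is identical to Case~1 of Proposition~\ref{P:sc-dw-prop-1} and uses no control hypothesis. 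Thus $\|w_n\|_{\Hsc^1}\to0$, and it remains to rule out a sequence $v_n$ with $\|v_n\|_{\Hsc^1}\ge\eta$.

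This is where the black box assumption replaces the averaging-over-geodesics argument. Because $\Omega$ is Euclidean with $a$ bounded below outside $\tV$, for $n$ large the cutoff $\chi(a/h_n)$ is supported in $\tV$, so $v_n$ lives where the absorbing potential $W$ vanishes; and on the trapped region $\{a=0\}\subset\tV$ the overdamped operator $P(z,h)$, the fixed-size (viscous) damped operator, and $Q(h,z)=-h^2\Delta-z+iW$ all reduce to $-h^2\Delta-z$. The plan is then to use the a priori bound $\int a|h\nabla u_n|^2=O(h_n^2)$ to replace, up to negligible errors, the overdamping term in $P(z,h)v_n$ by a fixed-size viscous damping term supported in $\{a>0\}$, and then to invoke the black box framework of \cite{BuZw-bb}---whose only input is the cutoff resolvent estimate \eqref{E:sc-res-est-W} for $Q(h,z)$ inside $\tV$---to glue that estimate to the geometric-control estimate for the fixed-size damped operator outside $V$. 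This forces $\|v_n\|_{\Hsc^1}=o(1)$, the desired contradiction. (The assertion in \S\ref{S:overdamping-II} that the examples of \cite{CdVP-I,Chr-NC,Chr-NC-erratum,Chr-QMNC} meet the hypotheses is exactly the statement that \eqref{E:sc-res-est-W} holds there, with $\delta$ recording the dynamics of the trapped set.)

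The hard part is this last comparison. Once localized to $\{a\lesssim h\}$, the fixed-size viscous damping contributes only $O(h^2)$ in $L^2$, but the overdamping term $-i\sqrt z\,h\,\div(a\nabla v_n)$ is, even after the a priori estimate, only of size $O(h^{1/2})$ in $\Hsc^{-1}$---too large to move to the right-hand side and absorb against the loss $h^{-1-\delta}$. The way around this is to split $v_n$ once more, microlocally, along the trapped set $\{a=0\}\cap\{|\xi|^2=1\}$: the piece microsupported away from the trapped set is handled by the damping actually present on $\{0<a\lesssim h\}$ together with the non-trapping of that shell, while the piece microsupported near the trapped set sees an operator agreeing with $Q(h,z)$ up to terms that vanish there (since $a$ and all its derivatives vanish on $\{a=0\}$), so that \eqref{E:sc-res-est-W} applies to it directly. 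Making this precise requires tracking three scales simultaneously---the fixed cutoffs adapted to $\tV$, the $h$-dependent cutoff $\chi(a/h)$ adapted to the overdamping, and the exotic symbol behaviour of $h^{-1}a$ near $\{a=0\}$---and is the step where the present argument is genuinely more delicate than its perfectly-controlled counterpart, Proposition~\ref{P:sc-dw-prop-1}.
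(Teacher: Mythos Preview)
Your overall structure matches the paper's: contradiction sequence with $g_n=o(h^{1+\delta})$, the splitting $u_n=v_n+w_n$ via $\chi(a/h)$, Case~1 for $w_n$ exactly as in Proposition~\ref{P:sc-dw-prop-1}, and the observation that one needs $k$ large (in terms of $\delta$) to make the commutator \eqref{E:P-chia-comm} of size $o(h^{1+\delta})$. All of that is right and is what the paper does.

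The gap is in Case~2. You manufacture a difficulty that is not there. You try to compare $P(z,h)v_n$ with an operator whose viscous damping is \emph{$h$-independent}, compute that the difference $-i\sqrt z\,h\,\div(a\nabla v_n)$ is only $O(h^{1/2})$ in $\Hsc^{-1}$, declare this too large, and then sketch (without carrying out) a further microlocal decomposition near the trapped set. None of this is needed. The point you are missing is that on the support of $v_n$ one has $a\lesssim h$, so the coefficient $h^{-1}a$ is already a \emph{bounded} (albeit $h$-dependent) function there. Hence $P(z,h)v_n=\tP(z,h)v_n$ with
\[
\tP(z,h)=-h\,\div\!\big(1+i\sqrt z\,b\big)\,h\nabla-z,
\]
where $b$ agrees with $h^{-1}a$ on $\{a\lesssim h\}$ and can be extended to a bounded nonnegative function equal to $1$ on $\{a\gtrsim h\}$; this $b$ is uniformly bounded and controls $\Omega$ geometrically outside $V$. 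The black box gluing of \cite{BuZw-bb}, whose only input is the cutoff bound \eqref{E:sc-res-est-W} inside $\tV$ together with control by $b$ outside $V$, applies to such uniformly bounded ($h$-dependent) coefficients and yields directly
\[
\|v\|_{\Hsc^1(\Omega)}\le Ch^{-1-\delta}\|\tP(z,h)v\|_{L^2(\Omega)},
\]
which contradicts $\tP(z,h)v_n=o(h^{1+\delta})$. There is no need to make the damping $h$-independent, no need for your extra microlocal splitting, and no ``three scales'' to track; this is exactly what the paper does, in three lines.
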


\begin{proof}
As in the proof of Proposition \ref{P:sc-dw-prop-1}, we assume for
contradiction that there is a sequence of $h_n \to 0$, an $\Hsc^1(\Omega)$ normalized sequence
$u_n$ and a sequence $z_n \in \cx$ such that $\Im z_n = o(h_n)$ and
\[
P(z_n, h_n) u_n = o(h_n^{1+\delta}).
\]
We again decompose $u_n = w_n + v_n$ where $v_n$ is localized to $\{ a
\leq h/C \}$ and $w_n$ is the complement.  Again there are the two
cases of a normalizable subsequence of either the $w_n$ or the $v_n$.
In the case of $w_n$, the argument proceeds exactly as in the proof of
Proposition \ref{P:sc-dw-prop-1}.

Computing the commutators as in \eqref{E:P-chia-comm} and using as in
the estimation of \eqref{E:P-chia-comm} that $\nabla a =\O(h^{(k-1)/k})$, we can take $k$ large enough
so that the right hand side of \eqref{E:sc-dw-3} is $o(h^{1+ \delta})$.
Hence we consider the equation 
\begin{equation}
\label{E:v-g-1}
\tP(z,h) v = \tg
\end{equation}
for $\| v
\|_{\Hsc^1(\Omega ) } = 1$, $\| \tg \|_{L^2(X)} = o(h^{1 + \delta})$,
and 
\[
\tP(z,h) =  (-h \div(1 + i  \sqrt{z} b) h \nabla - z ),
\]
for $b$ which controls $\Omega$ geometrically outside $V$.  

Using the black box framework of \cite{BuZw-bb}, we have the estimate
\begin{align*}
\| u \|_{\Hsc^1 (\Omega )  } & \leq  C h^{-1-\delta} \| \tP(z,h) u
\|_{L^2(\Omega)} + C h^{-\delta} \| b u \|_{L^2(\Omega)} \\
& \leq C h^{-1-\delta} \| \tP(z,h) u \|_{L^2(\Omega)}.
\end{align*}
But then our functions $v$ satisfying \eqref{E:v-g-1} should
satisfy
\[
\| \tP(z,h) v \|_{L^2(\Omega)} \geq h^{1 + \delta }/C \| v
\|_{\Hsc^1(\Omega)} = h^{1 + \delta} /C,
\]
which is a contradiction to the assumption that $\| \tg
\|_{L^2(\Omega)} = o(h^{1 + \delta})$.

This contradiction proves Proposition \ref{P:sc-dw-prop-2} and hence
Theorem \ref{T:wave-decay-2}

\end{proof}


\bibliographystyle{alpha}
\bibliography{DWbib}

\end{document}